\documentclass[11pt,reqno]{amsart}

\usepackage[T1]{fontenc}
\usepackage{lmodern}
\usepackage{microtype}
\usepackage{amsmath,amssymb,amsthm,mathtools}
\usepackage{comment}
\usepackage{dsfont}
\usepackage{enumitem}
\usepackage[margin=1in]{geometry}
\usepackage{xcolor}
\usepackage[colorlinks=true,linkcolor=blue!55!black,citecolor=blue!55!black,urlcolor=blue!55!black]{hyperref}

\numberwithin{equation}{section}

\newtheorem{theorem}{Theorem}[section]
\newtheorem{question}[theorem]{Question}

\theoremstyle{remark}

\newcommand{\R}{\mathbb R}
\newcommand{\G}{\mathbb G}
\newcommand{\Z}{\mathbb Z}
\newcommand{\N}{\mathbb N}

\title[Duality in Harmonic Analysis and beyond]{Duality in Harmonic Analysis and beyond}

\author[Bartosz Langowski]{Bartosz Langowski}
\address{Department of Mathematics and Physical Sciences, Franciscan University of Steubenville, 
1235 University Blvd., Steubenville, OH 43952, USA}
\email{blangowski@franciscan.edu}

\author[Mariusz Mirek]{Mariusz Mirek}
\address{Department of Mathematics, Rutgers University, Piscataway, NJ 08854, USA; and Instytut Matematyczny, Uniwersytet Wroc\l awski, Plac Grunwaldzki 2, 50-384 Wroc\l aw, Poland}
\email{mariusz.mirek@rutgers.edu}

\author[James Wright]{James Wright}
\address{School of Mathematics and Maxwell Institute for Mathematical Science, James Clerk Maxwell Building, The King’s Buildings, Peter Guthrie Tait Road, Edinburgh, EH9 3FD, UK}
\email{J.R.Wright@ed.ac.uk}

\author[B{\l}a{\.z}ej Wr{\'o}bel]{B{\l}a{\.z}ej Wr{\'o}bel}
\address{Instytut Matematyczny Polskiej Akademii Nauk,  \'Sniadeckich 8, 00-656 Warszawa, Poland; and Instytut Matematyczny, Uniwersytet Wroc\l awski, Plac Grunwaldzki 2, 50-384 Wroc\l aw, Poland}
\email{bwrobel@impan.pl}

\thanks{Bartosz Langowski was supported by the AMS–Simons Research Enhancement Grant for Primarily Undergraduate Institution (PUI) Faculty and by the National Science Centre of Poland under the OPUS grant no. 2025/59/B/ST1/01786. Mariusz Mirek was partially supported by the NSF CAREER grant (DMS-2236493).  
B{\l}a{\.z}ej Wr{\'o}bel was supported by the National
Science Centre, Poland, grant Sonata Bis 2022/46/E/ST1/00036. For the purpose of Open Access, the authors have applied a
CC-BY public copyright licence to any Author Accepted Manuscript (AAM) version arising from this submission.}

\date{}

\begin{document}

\begin{abstract}
In this short note, we develop a duality-based, purely
operator-theoretic method for proving several classical maximal
inequalities on $L^p$ spaces for $p\in(1,\infty)$. The method applies
to maximal ergodic operators, Hardy--Littlewood maximal functions, and
maximal Radon averaging operators, and derives their boundedness
directly from structural properties of the underlying operators.
Further, we show that the same duality principle extends beyond
maximal functions. We develop a theory of $r$-variational and jump
inequalities for sequences of bounded positive monotone operators on
Hilbert spaces, using soft arguments that apply to various classical
operators in analysis.  In particular, answering a question that had
been open for some time, we give direct proofs of $r$-variational and
jump inequalities for convolution operators associated with lacunary
dilations of smooth bumps on $\mathbb R^d$, completely avoiding any
appeal to martingale inequalities.
\end{abstract}

\maketitle

\section{Introduction}
The aim of this note is to present a purely operator-theoretic
approach to several classical maximal inequalities, as well as to
variational and even jump inequalities. The arguments developed below
avoid stopping times, covering lemmas, Calder{\'o}n--Zygmund
decompositions, Littlewood--Paley square functions, and measurable
linearizations of the underlying seminorms. Our goal is to highlight the
operator structure behind these estimates and to show that, in a
number of classical situations, this structure alone is sufficient to
obtain the desired bounds. For instance, in the context of maximal
estimates, the key mechanism is to dualize the finite supremum and
retain the resulting expression in vector-valued form.  Namely, let
$(X, \mathcal B(X), \mu)$ be a $\sigma$-finite measure space, let
$p\in(1, \infty)$ and $p':={p}/{(p-1)}$, and let $\mathbb I$ be a
finite set.  For measurable functions $(F_i)_{i\in \mathbb I}$, we
have the identity
\begin{equation}\label{eq:exact-duality}
 \big\|\sup_{i\in \mathbb I}|F_i|\big\|_{L^p(X)}
 =
 \sup_{\big\|\sum_{i\in \mathbb I}|g_i|\big\|_{L^{p'}(X)}\leq 1}
 \Big|\sum_{i\in \mathbb I}\int_X F_i g_id\mu\Big|
\end{equation}
by duality. Consequently, if $S_i:L^p(X)\to L^p(X)$ is a linear operator for every $i\in\mathbb I$, and
\begin{equation}\label{eq:vector-adjoint}
 \Big\|\sum_{i\in \mathbb I}S_i^*g_i\Big\|_{L^{p'}(X)}
 \leq C\Big\|\sum_{i\in \mathbb I}|g_i|\Big\|_{L^{p'}(X)},
\end{equation}
then
\begin{equation}\label{eq:max-from-vector}
 \big\|\sup_{i\in \mathbb I}|S_i f|\big\|_{L^p(X)}\leq C\|f\|_{L^p(X)}.
\end{equation}
In fact, the optimal constants in \eqref{eq:vector-adjoint} and
\eqref{eq:max-from-vector} are equal, since we have
\begin{align}
\label{eq:2}
 \sup_{\|f\|_{L^p(X)}\leq1}
       \big\|\sup_{i\in \mathbb I}|S_i f|\big\|_{L^p(X)}
 =
 \sup_{\big\|\sum_{i\in \mathbb I}|g_i|\big\|_{L^{p'}(X)}\leq1}
       \Big\|\sum_{i\in \mathbb I}S_i^*g_i\Big\|_{L^{p'}(X)}.
\end{align}
Thus the maximal operator and the dualized sum have exactly the same
norm. The last identity, as illustrated in Section \ref{sec:dvj}, has
analogues for the $r$-variational seminorm \eqref{eq:6} and even for
the jump-counting function \eqref{eq:7}; see identities
\eqref{eq:variation-exact-duality} and \eqref{eq:jump-exact-duality},
respectively. This extends the duality method from maximal bounds to
$r$-variational and jump inequalities, which quantify convergence by
measuring fluctuations across scales and transform these nonlinear
quantities into vector-valued expressions involving the original
operators or their adjoints.

In \cite{Bou}, Bourgain used variation and jump estimates to study
quantitative forms of pointwise convergence. His work initiated a
substantial literature in ergodic theory \cite{jkrw, Kr, KMT, Mes,
MSS, MST2, MSZ3, Trojan}, discrete harmonic analysis \cite{IMMS, MSZ1,
MT, RW}, and classical harmonic analysis \cite{JSW, JG, KLMP, MST1,
MSZ2, OSTTW}. A recurring feature of proofs in this literature is the
use of L{\'e}pingle's $r$-variational inequality \cite{Le} for
martingales when $r>2$, or the endpoint $r=2$ jump inequality of
Pisier and Xu \cite{PX}, which was also rediscovered by Bourgain
\cite{Bou} and further extended in \cite{jkrw, MSZ1}. Even for convolution operators associated with dilations of smooth bumps, the $r$-variation and jump inequalities require comparison with the corresponding martingale estimates; see \cite[Section 4]{JSW}. Understanding the mechanism behind these bounds, and in particular whether martingale methods are genuinely necessary, has intrigued many authors since Bourgain's work \cite{Bou} and led to the following natural question:

\begin{question}\label{q:direct-variation-jump}
Let $\varphi:\mathbb R\to \mathbb R$ be a nonnegative Schwartz
function with mean one, and assume that its Fourier transform is
nonnegative even and decreasing on $\mathbb R_+:=(0,\infty)$.  For a
lacunary sequence $(t_n)_{n\in\mathbb Z}\subseteq\mathbb R_+$, set
$T_nf:=\varphi_{t_n}*f$, where $\varphi_t(x)=t^{-1}\varphi(x/t)$. The
question is whether one can prove
\begin{align*}
\|V^r(T_nf:n\in\mathbb Z)\|_{L^2(\mathbb R)}
&\lesssim_r\|f\|_{L^2(\mathbb R)}, \qquad r>2,\\
\sup_{\lambda\in\mathbb R_+}
\|\lambda N_\lambda(T_nf:n\in\mathbb Z)^{1/2}\|_{L^2(\mathbb R)}
&\lesssim\|f\|_{L^2(\mathbb R)},
\end{align*}
without appealing to the corresponding martingale inequalities.
\end{question}

Theorem \ref{thm:abstract} gives an affirmative answer to Question
\ref{q:direct-variation-jump} within an abstract framework. We
establish maximal, $r$-variational, and jump inequalities for
positivity-preserving operators on Hilbert spaces under suitable
monotonicity and positivity assumptions. This is somewhat surprising,
since for a long time the projectivity property of conditional
expectations induced by a filtration was viewed as a decisive
mechanism behind $r$-variational and jump inequalities for
martingales. Theorem \ref{thm:abstract} shows that, in fact, these
estimates can arise from a different set of structural properties.

\subsection{A brief history and  overview of the dualization method} The use of
duality to study maximal inequalities has a long history, particularly
in martingale theory. The dual form of Doob's inequality appears in
the work of Burkholder, Davis, and Gundy \cite{BDG}; see also
\cite{AMS,Dil} for related formulations and arguments. This point of
view is central to Junge's proof of the noncommutative Doob inequality
\cite{J} and is developed further in the theory of noncommutative
maximal ergodic inequalities \cite{JX}. The idea of dualizing maximal
functions was also extensively exploited by Bourgain in \cite{Bou} in
the study of various maximal inequalities in ergodic theory and
harmonic analysis, including his celebrated logarithmic maximal
inequality, see \cite[Sections 4 and 7]{Bou}.

Our starting
point is the duality identity in \eqref{eq:2}. We examine it first in
the context of maximal ergodic averages, see Theorem
\ref{thm:ergodic-maximal}, and then for maximal convolution operators
associated with sequences of Radon probability measures on locally
compact abelian groups arising naturally in harmonic analysis, see
Theorem \ref{thm:main}. These include Hardy--Littlewood averages,
Radon-type averaging operators as well as lacunary spherical averages in $\R^d$, see \cite{DF, bigs} and \cite{MSZ2}. The substantive question is how to
estimate the sum on the right-hand side of \eqref{eq:2} uniformly in
the number of operators. Duality alone does not provide such an
estimate. In the martingale setting, conditional expectations $(\mathbb E_k)_{k\in\Z}$ satisfy
the projection property $\mathbb E_k\mathbb E_n=\mathbb E_{\min\{k, n\}}$. This identity provides the structure
needed to control interactions between different indices. For the
families considered in Theorems \ref{thm:ergodic-maximal} and
\ref{thm:main}, different structural properties serve this purpose:
\begin{itemize}
\item[(i)] For the ergodic averages in \eqref{eq:ergodic-average}, invariance and an integrated comparison between scales lead to the so-called engulfing inequality \eqref{eq:ergodic-engulfing}, which in turn yields a moment estimate providing the required uniformity with respect to the number of underlying operators.

\item[(ii)] For the convolution operators $T_n$ in \eqref{eq:Tn-def}, the low and high-frequency Fourier estimates given respectively in \eqref{eq:low-frequency} and \eqref{eq:high-frequency} control the interactions between separated scales. The decisive space here is $L^2$, where Plancherel's theorem allows the low-frequency approximation of one multiplier to be combined directly with the high-frequency decay of the other, producing the required cancellation between separated scales.

\item[(iii)] To reach the full $L^p$ range, an averaged translation estimate controls the failure of the identity $T_kf\,T_ng=T_k(fT_ng)$. More precisely, the difference $T_kf\,T_ng-T_k(fT_ng)$  is small, and this smallness yields maximal estimates uniformly in the number of operators.
\end{itemize}

The emphasis here is on estimating the resulting vector-valued sums arising in \eqref{eq:2} directly from properties of the underlying operators. For the families considered in this paper, invariance, positivity, and suitable estimates for interactions between different scales lead to inequalities involving the same finite vector norms that we ultimately seek to bound.

In Theorem \ref{thm:abstract}, presented in Section \ref{sec:hil}, we establish $r$-variational and jump inequalities on $L^2$ for sequences of bounded positive monotone  operators. The main point is a pair of exact dualization identities for $r$-variations, \eqref{eq:variation-exact-duality}, and jump-counting functions, \eqref{eq:jump-exact-duality}. As in the case of maximal functions, these identities transform the nonlinear variational and jump quantities into vector-valued expressions involving the original operators and their increments. After dualization, self-adjointness and telescoping reduce the problem to a single Hilbert-space differencing estimate.

The proof of Theorem \ref{thm:abstract} is entirely based on soft Hilbert-space arguments: the Cauchy--Schwarz inequality for positive operators, positivity preservation, monotonicity and telescoping identities. In particular, the argument uses neither martingale inequalities nor martingale difference decompositions, and it requires no commutativity, convolution structure, smoothness, dilation structure, interpolation, transference, or Calder{\'o}n--Zygmund theory. This abstract approach applies directly to anisotropic convolution operators, ordered Radon convolution measures on locally compact abelian groups, and conditional expectations associated with decreasing filtrations.

A key feature of the method is that, despite the different settings involved, the proofs follow the same sequence of steps. First, we dualize the seminorm appearing inside the $L^p$ norm: for maximal functions we invoke \eqref{eq:2}, while for $r$-variations and jump inequalities we use the dualization formulas \eqref{eq:variation-exact-duality} and \eqref{eq:jump-exact-duality}, respectively. In each case, the nonlinear quantity is transformed into an $\ell^1$-valued sum. This avoids introducing a measurable choice of scale, so that one works with the original operators themselves or with their adjoints. Next, we expand an appropriate norm or moment of this sum. We then isolate the interaction that captures the favorable behavior of the operators and constitutes the main term. The remaining terms are estimated separately. Those that can be controlled directly by the available operator estimates are absorbed into the error term, whereas the terms that still involve the unknown maximal quantity give rise to a bootstrap inequality. Finally, we close the argument by induction on the size of the operator norm.

Our purpose is not to advocate dualization as a replacement for
classical methods. In particular, the present approach appears less
directly adapted to weak-type $(1,1)$ estimates. Rather, we view it as
a complementary method that isolates the underlying operator structure
of the problem. This perspective is particularly natural in ergodic
theory, where averaging operators are often studied on Hilbert spaces
and spectral or operator-theoretic methods are decisive. It may be
especially useful in settings involving actions of more general
groups \cite{Nevo}, where geometric tools such as covering arguments are
unavailable or poorly adapted, while Hilbert-space techniques remain
accessible. As we shall see below,
the dualization identity \eqref{eq:2} leads to a simple operator-theoretic framework with
consequences that are not immediately apparent from the standard
proofs.

\subsection{Notation} Here we gather the basic notation that will be used throughout the paper. The sets $\mathbb Z$, $\mathbb R$, $\mathbb C$, and $\mathbb T:=\mathbb R/\mathbb Z$ have their standard meanings. The sets of nonnegative and positive integers are denoted, respectively, by $\mathbb N:=\{0,1,2,\ldots\}$ and $\mathbb Z_+:=\{1,2,\ldots\}$. We also write $\mathbb R_+:=(0,\infty)$. For any $N\in\mathbb R_+$, we define $[N]:=(0,N]\cap\mathbb Z$ and $[\pm N]:=[-N,N]\cap\mathbb Z$. All vector spaces are defined over $\mathbb C$. Unless otherwise stated, $(X,\mathcal B(X),\mu)$ denotes a $\sigma$-finite measure space. For $p\in(0,\infty)$, the space of all measurable functions $f:X\to\mathbb C$ such that $|f|^p$ is integrable is denoted by $L^p(X)$, whereas $L^\infty(X)$ denotes the space of all essentially bounded measurable functions.

\section*{Acknowledgments}
We have been working on this project since September 2025. We first
proved Theorem \ref{thm:ergodic-maximal} in the martingale setting,
where, using a numerical inequality 
\cite[Lemma 2.3]{IW}, we established a variant of the engulfing
inequality \eqref{eq:ergodic-engulfing}. At that time, we were unaware
of the work of Burkholder, Davis, and Gundy \cite{BDG}, as well as
\cite{AMS,Dil}, although we were aware that duality had been
extensively exploited by Bourgain in \cite{Bou}. We then extended this
approach to the Hardy--Littlewood maximal operator and to the maximal
ergodic operator, essentially as in Theorem \ref{thm:ergodic-maximal}.  As a next
step, we established Theorem \ref{thm:main} for $p=2$ starting with the following identity
\[
\Big\|\sum_{n\in\mathbb I}T_nf_n\Big\|_{L^2(\mathbb G)}^2
=
\sum_{n\in\mathbb I}\|T_nf_n\|_{L^2(\mathbb G)}^2
+
2\operatorname{Re}
\sum_{\substack{n,k\in\mathbb I\\ n<k}}
\langle f_n,T_kf_k\rangle+2\operatorname{Re}
\sum_{\substack{n,k\in\mathbb I\\ n<k}}
\langle T_nf_n-f_n,T_kf_k\rangle.
\]
The first sum was handled using the boundedness of the operators, the second by positivity together with a best-constant argument, while the last one was estimated using Plancherel's theorem and  inequalities \eqref{eq:low-frequency} and \eqref{eq:high-frequency}. Then, we closed the argument by bootstrapping in the size of the resulting constants.
Subsequently, we completed the $L^p$ theory by observing that, in order to extend the $L^2$ argument beyond the Hilbert-space setting, the term appearing under the inner product in the last sum should be further decomposed as
\begin{align}
\label{eq:9}
(T_nf_n-f_n)T_kf_k
=
(T_n-I)(f_nT_kf_k)
+
T_nf_n\,T_kf_k
-
T_n(f_nT_kf_k).
\end{align}
This decomposition separates the part that retains the $L^2$
cancellation from the commutator-type term needed to pass to the full
$L^p$ range.  We finally began extending the duality method beyond
maximal functions and first proved Theorem \ref{thm:abstract} for
convolution operators associated with dilations of smooth bumps, as in
Question \ref{q:direct-variation-jump}, before arriving at its present
abstract formulation.

While preparing the first draft of the paper in September 2026, we
began experimenting with AI. Initially, it was unable to prove Theorem
\ref{thm:main} even for $p=2$. It was successful only for measures
satisfying an engulfing-type inequality such as
\eqref{eq:ergodic-engulfing}. For instance, for Radon measures
supported on the parabola, it consistently reduced the problem to the
classical approach involving the Hardy--Littlewood maximal inequality
and Littlewood--Paley theory.  After being provided with our notes
containing the $L^2$ theory, however, it was able to extend the
argument to the $L^p$ setting.  AI simplified our argument by eliminating one term from the original decomposition in \eqref{eq:9}, reducing the analysis to the expression $T_nf_n\,T_kf_k-T_n(f_nT_kf_k)$ and allowing us to present the $L^2$ and $L^p$ theories in a unified way. We also used AI to prepare an initial
draft of these notes from material written by the authors. AI was used
to make all constants explicit, reduce the number of quantifiers in
the statements of our theorems, and assist in identifying the
maximizing functions $w_\pi^{(u)}$ used to establish formula
\eqref{eq:jump-exact-duality}. AI was also useful in formulating
Theorem \ref{thm:abstract} in its present abstract form.  All
mathematical ideas, strategies, and underlying intuitions were
developed by the authors. The authors are  responsible for the
final version of the paper and for the correctness of all its
arguments.\section{Ergodic averages}

Let $(X,\mathcal B(X),\mu)$ be a $\sigma$-finite measure space and let $T:X\to X$ be an
invertible measure-preserving transformation.  Define the symmetric ergodic
averages
\begin{equation}\label{eq:ergodic-average}
 A_N f(x):=\frac{1}{2N+1}\sum_{n\in[\pm N]}f(T^nx),
 \qquad x\in X, \ N\in\N.
\end{equation}
They are self-adjoint,
positivity preserving, contractions on every $L^p(X)$. 

\begin{theorem}[Maximal ergodic theorem]\label{thm:ergodic-maximal}
For every $p\in(1,\infty]$, there exists a constant $C_p\in\mathbb R_+$ such that for every $f\in L^p(X)$, we have
\begin{equation}\label{eq:ergodic-maximal}
 \big\|\sup_{N\in \N}|A_Nf|\big\|_{L^p(X)}
 \leq C_p\left\|f\right\|_{L^p(X)}.
\end{equation}
\end{theorem}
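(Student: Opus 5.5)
Since $A_N$ is a contraction on $L^\infty(X)$, the case $p=\infty$ is trivial, so fix $p\in(1,\infty)$. I restrict the supremum to a finite set, and in fact to dyadic-type scales. Every $N\geq 1$ lies in some $[2^k,2^{k+1})$, and positivity gives $A_N f\le \frac{2\cdot 2^{k+1}+1}{2N+1}A_{2^{k+1}}|f|\le 3A_{2^{k+1}}|f|$ pointwise. Hence it suffices to bound $\sup_{k\in\mathbb I}A_{N_k}$ for $N_k=2^k$ and $\mathbb I\subseteq\N$ finite, uniformly in $\mathbb I$, acting on $f\ge0$. By the identity \eqref{eq:2}, and using that the $A_N$ are self-adjoint and positivity preserving (so we may take $g_i\ge0$), the goal becomes
\[
\Big\|\sum_{k\in\mathbb I}A_{N_k}g_k\Big\|_{L^{p'}(X)}\le C\Big\|\sum_{k\in\mathbb I}g_k\Big\|_{L^{p'}(X)},\qquad g_k\ge 0 .
\]
Let $\mathcal C(\mathbb I)$ be the best constant here, equivalently the maximal norm over $\mathbb I$. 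It is finite, at most $|\mathbb I|$, so a bootstrap in $\mathcal C$ is legitimate.

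The structural input is the engulfing inequality: for $k<n$ (so $N_k\le N_n/2$),
\[
A_{N_k}A_{N_n}\le c\,A_{N_{n+1}}, \qquad A_{N_n}A_{N_k}\le c A_{N_{n+1}}
\]
pointwise on nonnegative functions. The reason is that the kernel of $A_{N_k}A_{N_n}$ is a probability measure supported on $[\pm(N_k+N_n)]\subseteq[\pm 2N_n]$ with density at most $1/(2N_n+1)$. This is the ergodic replacement for the martingale identity $\mathbb E_k\mathbb E_n=\mathbb E_{\min\{k,n\}}$. I first handle the case $q=p'$ is an integer $m\ge2$; the general case then follows by interpolating the vector-valued dual estimate, or by running the same argument with $\lfloor q\rfloor$ factors and Hölder.

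Write $G=\sum_k g_k$ and $H=\sum_k A_{N_k}g_k$, and expand the moment
\[
\int H^m=\sum_{k_1,\dots,k_m}\int A_{N_{k_1}}g_{k_1}\cdots A_{N_{k_m}}g_{k_m}.
\]
I order the indices so that $k_1$ is maximal, then use invariance and self-adjointness to move $A_{N_{k_1}}$ off $g_{k_1}$:
\[
\int g_{k_1}\,A_{N_{k_1}}\Big(\prod_{j\ge2}A_{N_{k_j}}g_{k_j}\Big).
\]
For $j\ge2$ we have $k_j\le k_1$, and the engulfing inequality should bound $A_{N_{k_1}}(A_{N_{k_j}}g)\lesssim A_{N_{k_1+1}}g$. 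The problem is that $A_{N_{k_1}}$ acts on a product, not a single factor. Here I would use the crude pointwise bound $A_{N_{k_j}}g_{k_j}\le \sup_{i}A_{N_i}G=:MG$ on $m-2$ of the factors and keep one factor linear. Summing over the remaining indices then gives a bound of the form
\[
\int H^m\lesssim_m \int G\cdot (MG)^{m-2}\cdot \widetilde M G,
\]
where $\widetilde M$ is the maximal operator over the shifted scales $N_{k+1}$. By Hölder, and since $\widetilde M$ has the same bound $\mathcal C$ (up to enlarging $\mathbb I$ by one scale), this yields $\|H\|_m^m\lesssim_m \|G\|_m^m\,\mathcal C_{m'}^{m-1}$, where $\mathcal C_{m'}$ is the maximal bound on $L^{m}$.

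\textbf{The main obstacle.} The bound just obtained involves $\mathcal C$ on the primal side at the exponent $m$, while the quantity being estimated is the dual constant at $m'$. So the bootstrap has to be set up carefully. I would either work at $p=m$ directly and dualize only the one linear factor, or arrange that the main diagonal terms give $\|G\|^m$ and the off-diagonal terms give a power $\mathcal C^{1-1/m}$. Then $\mathcal C\le C_m+C_m\mathcal C^{1-1/m}$, which forces $\mathcal C\lesssim_m 1$ uniformly in $\mathbb I$. Getting an exponent strictly below one on $\mathcal C$, and making the engulfing step rigorous when $A_{N_{k_1}}$ hits the product, is the step I expect to be hardest. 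Monotone convergence in $\mathbb I$ finishes the proof.
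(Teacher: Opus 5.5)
There is a genuine gap, and it sits exactly at the step you flag as hardest. You single out the index with the \emph{largest} scale and move $A_{N_{k_1}}$ onto $\prod_{j\ge2}A_{N_{k_j}}g_{k_j}$. That puts a long average on top of short ones. The shifts $T^{n}$, $|n|\le N_{k_1}$, are much larger than the windows $N_{k_j}$, so no factor can absorb them. Your composition bound $A_{N_k}A_{N_n}\le cA_{N_{n+1}}$ therefore stops helping once the outer average acts on a product. For $m=2$ there is no product, and your argument does close: $\int H^2\le 4\int G\,\widetilde M G$, and $L^2$ is self-dual.

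For $m\ge3$ the proposed repair does not give $\int G\,(MG)^{m-2}\,\widetilde MG$, for two reasons:
\begin{itemize}
\item After you replace $m-2$ factors by $MG$, the operator $A_{N_{k_1}}$ still acts on the product $(MG)^{m-2}A_{N_{k_2}}g_{k_2}$.
\item The bound $A_{N_{k_j}}g_{k_j}\le MG$ holds term by term, so the sums over $k_3,\dots,k_m$ remain and cost up to $|\mathbb I|^{m-2}$. They cannot be recombined, because $\sum_k A_{N_k}g_k$ is not pointwise $\lesssim M(\sum_k g_k)$. For the shift on $\mathbb Z$, take $g_k=2^k\mathds{1}_{\{2^k\}}$: at the origin the left side is at least $|\mathbb I|/3$, while $MG(0)<1$.
\end{itemize}
Even granting that bound, estimating by maximal functions in $L^m$ places constants at conjugate exponents on the two sides of the bootstrap. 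That part could be patched using $p=2$ and interpolation, but the product problem cannot.

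The missing idea is to choose, in each $m$-tuple, a position $r$ with the \emph{smallest} scale $M=N_{k_r}$ and expand only that average. By invariance, for nonnegative $f_j$,
\[
\int_X A_Mf_1\prod_{j\ge2}A_{N_j}f_j\,d\mu=\frac{1}{2M+1}\sum_{|n|\le M}\int_X f_1(y)\prod_{j\ge2}A_{N_j}f_j(T^{-n}y)\,d\mu(y).
\]
Since $|n|\le M\le N_j$, each factor separately satisfies $A_{N_j}f_j(T^{-n}y)\le 2A_{2N_j}f_j(y)$. The product is then harmless: every factor absorbs the small shift on its own.

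Summing over tuples, with the factor $m$ accounting for the choice of $r$, gives $\int H^m\le m2^{m-1}\int G\,\widetilde H^{m-1}$, where $\widetilde H:=\sum_k A_{2N_k}g_k$. Do not pass to maximal functions here. $\widetilde H$ is again a dual sum, so take the best constant $\mathcal C$ uniformly over all choices of scales; this also makes your dyadic reduction unnecessary. Then $\|\widetilde H\|_{L^m}\le \mathcal C\|G\|_{L^m}$, and H\"older gives $\mathcal C^m\le m2^{m-1}\mathcal C^{m-1}$, so the bootstrap closes in one constant at one exponent.

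This yields the maximal bound at $p=m/(m-1)$ for every integer $m\ge2$. Since these exponents tend to $1$, interpolation with $p=\infty$ covers all $p\in(1,\infty]$.
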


The proof of Theorem \ref{thm:ergodic-maximal} applies equally well to
martingales \cite{AMS, BDG, Dil, J}, Hardy--Littlewood maximal functions in both the
continuous and discrete settings \cite{bigs}, and many other situations \cite{JX, W, YK}.  The
maximal inequality in Theorem \ref{thm:ergodic-maximal}, together with
convergence on a dense subclass, implies the almost everywhere
convergence of the averages \eqref{eq:ergodic-average} for functions
in $L^p(X)$ with $p\in(1,\infty)$. This recovers Birkhoff's celebrated
pointwise ergodic theorem \cite{Bir} in this range.  It is well known that the
corresponding maximal operator also satisfies a weak-type $(1,1)$
inequality \cite{W, YK}. Combined with convergence on a dense subclass, this
extends the pointwise convergence theorem to all functions in
$L^1(X)$. The dualization method does not seem well suited to
weak-type $(1,1)$ estimates, and an endpoint analogue remains unclear.

From the point of view of ergodic theory and many of its statistical
applications, ergodic theorems are most naturally considered on finite
measure spaces and for bounded functions. Under an ergodicity
assumption, the pointwise ergodic theorem identifies the limit of the
averages \eqref{eq:ergodic-average} with the space average. In the
infinite-measure setting, the ergodic averages of integrable functions
converge to zero under the usual ergodicity assumptions. Since bounded
functions are sufficient for the vast majority of such applications,
establishing \eqref{eq:ergodic-maximal} on $L^2(X)$ is already more
than enough whenever $\mu(X)<\infty$, since in this case
$L^\infty(X)\subseteq L^2(X)$. At the same time, precisely in the
$L^2(X)$ setting, a purely operator-theoretic approach to maximal
functions may be decisive in problems that go beyond the classical
framework, see especially \cite{Nevo} and the references therein.

\begin{proof}[Proof of Theorem \ref{thm:ergodic-maximal}]
Fix an integer $k\geq2$ and define the conjugate exponent $p_k:=k/(k-1)$. For any finite set \(\mathbb I=\{N_i:i\in[L]\}\subset\mathbb N\) with $L\in\Z_+$, it suffices to prove that
\begin{align}
\label{eq:1}
\big\|\sup_{i\in[L]}|A_{N_i}f|\big\|_{L^{p_k}(X)}
\leq k2^{k-1}\|f\|_{L^{p_k}(X)}.
\end{align}
Then \eqref{eq:ergodic-maximal} for \(p=p_k\) follows from \eqref{eq:1} and the monotone convergence theorem, and interpolation yields \eqref{eq:ergodic-maximal} for every \(p\in(1, \infty]\), since $\lim_{k\to\infty}p_k=1$. Using \eqref{eq:2}, we will work with the dualized formulation of \eqref{eq:1}.
For an integer $k\geq2$ and $L\in\Z_+$, let $B_{k,L}\ge0$ be the best constant in
\begin{equation}\label{eq:ergodic-vector}
 \Big\|\sum_{i\in[L]}A_{N_i}h_i\Big\|_{L^k(X)}
 \leq B_{k,L}\Big\|\sum_{i\in[L]}|h_i|\Big\|_{L^k(X)},
\end{equation}
uniformly over $N_1,\ldots,N_L\in\N$.  It is enough to consider
$h_i\geq0$.  The constant $B_{k,L}$ is finite  by the triangle inequality  (the bound $B_{k,L} \le L$ trivially holds). For every integer $k\geq2$ and every $L\in\Z_+$, we prove that
\begin{equation}\label{eq:ergodic-B-bound}
 B_{k,L}\leq k2^{k-1}.
\end{equation}
For this purpose we will proceed in two steps.
\smallskip
\paragraph{\textbf{Step 1}} The key observation is the following engulfing inequality.
Let $k\geq2$, let $0\leq M\leq N_j$ for $2\leq j\leq k$, and let
$f_1,\ldots,f_k$ be nonnegative functions.  Then
\begin{equation}\label{eq:ergodic-engulfing}
 J:=\int_X A_Mf_1\prod_{j=2}^{k}A_{N_j}f_j\,d\mu
 \leq
 2^{k-1}\int_X f_1\prod_{j=2}^{k}A_{2N_j}f_j\,d\mu.
\end{equation}
Expanding only the first average and using invariance under $T$ gives
\begin{align*}
J=\frac{1}{2M+1}\sum_{m\in [\pm M]}
 \int_X f_1(y)\prod_{j=2}^{k}A_{N_j}f_j(T^{-m}y)\,d\mu(y).
\end{align*}
If $|m|\leq M\leq N$, then
\begin{align*}
A_Nf(T^{-m}y) \leq\frac{4N+1}{2N+1}A_{2N}f(y)
\leq2A_{2N}f(y).
\end{align*}
Inserting this bound to $J$, inequality   \eqref{eq:ergodic-engulfing} follows.
\smallskip
\paragraph{\textbf{Step 2}} Set
\[
F:=\sum_{i\in[L]}A_{N_i}h_i, \qquad H:=\sum_{i\in[L]}h_i, \qquad \widetilde F:=\sum_{i\in[L]}A_{2N_i}h_i.
\]
Expanding the full $k$-th moment, we have 
\begin{equation}\label{eq:full-moment}
 \int_X F^k\,d\mu
 =\sum_{i_1,\ldots,i_k\in[L]}
 \int_X\prod_{r=1}^{k}A_{N_{i_r}}h_{i_r}\,d\mu.
\end{equation}
In each ordered $k$-tuple choose one position $r$ for which $N_{i_r}$ is
minimal. Invoking \eqref{eq:ergodic-engulfing}, we obtain
\[
\int_X\prod_{s=1}^{k}A_{N_{i_s}}h_{i_s}\,d\mu \leq2^{k-1} \int_Xh_{i_r}\prod_{s\neq r}A_{2N_{i_s}}h_{i_s}\,d\mu.
\]
Using this and \eqref{eq:full-moment}, applying  H\"older's inequality and using \eqref{eq:ergodic-vector} for $\widetilde F$, we may conclude that
\begin{align*}
\|F\|_{L^k(X)}^k
\leq k2^{k-1}\|H\|_{L^k(X)}
\|\widetilde F\|_{L^k(X)}^{k-1}
\leq k2^{k-1}B_{k,L}^{k-1}\|H\|_{L^k(X)}^{k},
\end{align*}
which in turn implies that  $ B_{k,L}^{k}\leq k2^{k-1}B_{k,L}^{k-1}$ and 
\eqref{eq:ergodic-B-bound} follows as desired.
\end{proof}

\section{Convolutions with Radon measures} 
\label{sec:radon}
Let $\G=(\G, +)$ be a locally compact abelian (LCA) group, written additively,
with Haar measure $dx$.  Its Pontryagin dual $\widehat\G$ consists of 
continuous characters $\chi:\G\to\mathbb T$.  Write $\mathbf e$ for the
trivial character and choose the dual Haar measure $d\chi$ with the Plancherel
normalization.  Our conventions are
\[
 \widehat f(\chi):=\int_\G f(x)\overline{\chi(x)}\,dx,
 \qquad
 \widehat\mu(\chi):=\int_\G\overline{\chi(y)}\,d\mu(y),
 \qquad
\|\widehat f\|_{L^2(\widehat\G)}=\left\|f\right\|_{L^2(\G)}.
\]
The Fourier transform $\widehat f$ is initially defined for $f\in L^1(\G)$; on $L^2(\G)$
we use its unitary extension.  These are the standard group
Fourier conventions; see, for example, \cite{Rudin}.
Let $(\mu_n)_{n\in\Z}$ be Radon probability measures on $\G$ and let
$(s_n)_{n\in\Z}$ be positive numbers satisfying
\begin{equation}\label{eq:lacunary-scales}
\inf_{n\in\Z} \frac{s_{n+1}}{s_n}=\lambda>1.
\end{equation}
For each $n\in \Z$ any $x\in\G$ and $\chi\in\widehat{\G}$, set
\begin{equation}\label{eq:Tn-def}
 T_nf(x):=f*\mu_n(x)=\int_\G f(x-y)\,d\mu_n(y),
 \qquad m_n(\chi):=\widehat{\mu_n}(\chi).
\end{equation}
Thus $\widehat{T_nf}=m_n\widehat f$, and each $T_n$ is a
positive contraction on every $L^p(\G)$, preserves the  $L^1(\G)$ norm, and
preserves constants on $L^\infty(\G)$.
Fix a measurable function $\rho:\widehat\G\to[0,\infty)$ with
$\rho(\mathbf e)=0$.  Assume that there are $\alpha,\beta\in\R_+$ and $C_0, C_{\infty}\in\R_+$ such that  for every  $n\in\Z$ and $\chi\in\widehat\G$, the following estimates hold:
\begin{align}
|1-m_n(\chi)|&\leq C_0\min\{1,(s_n\rho(\chi))^\alpha\},
\label{eq:low-frequency}\\
|m_n(\chi)|&\leq C_\infty\min\{1,(s_n\rho(\chi))^{-\beta}\}.
\label{eq:high-frequency}
\end{align}
Estimate \eqref{eq:low-frequency} forces $m_n(\chi)=1$ on
$\{\chi\in\widehat{\G}:\rho(\chi)=0\}$, so the second minimum is
defined to be $1$ when $\rho(\chi)=0$.  No triangle inequality,
homogeneity or continuity of $\rho$ is needed.  The value $\rho(\chi)$ is a frequency parameter, not necessarily a norm. No group dilation
or relation among the measures is assumed.  All constants below depend
only on the displayed numerical parameters, not on $\G$ or $\rho$.

\begin{theorem}[Maximal estimates for convolutions with Radon measures]
\label{thm:main}
Let $(\mu_n)_{n\in\mathbb Z}$ be a sequence of Radon probability measures on an LCA group $\mathbb G$ satisfying conditions \eqref{eq:lacunary-scales}--\eqref{eq:high-frequency}. Then, for every $p\in(1,\infty]$, there exists a constant
$C(p):=C(p;\alpha,\beta,\lambda,C_0,C_\infty)\in\mathbb R_+$ such that, for every $f\in L^p(\mathbb G)$, we have
\begin{equation}\label{eq:max}
\big\|\sup_{n\in\Z}|f*\mu_n|\big\|_{L^p(\G)}
\leq C(p)\|f\|_{L^p(\G)}.
\end{equation}
\end{theorem}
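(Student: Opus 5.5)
By \eqref{eq:2} and monotone convergence, it suffices to fix a finite set $\mathbb I\subset\Z$ and bound, uniformly in $\mathbb I$, the best constant $B_{p',\mathbb I}$ in
\[
\Big\|\sum_{n\in\mathbb I}T_n^*h_n\Big\|_{L^{p'}(\G)}\le B\Big\|\sum_{n\in\mathbb I}|h_n|\Big\|_{L^{p'}(\G)} .
\]
Here $T_n^*$ is convolution with the reflected measure $\widetilde\mu_n$, which satisfies the same hypotheses \eqref{eq:low-frequency}--\eqref{eq:high-frequency}. So it suffices to bound the best constant $B_{q,\mathbb I}$ in $\|\sum_n T_nh_n\|_q\le B\|\sum_n|h_n|\|_q$ for $h_n\ge0$. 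Positivity allows this restriction, at the cost of a factor $4$. As in Theorem \ref{thm:ergodic-maximal}, the triangle inequality gives $B_{q,\mathbb I}\le\#\mathbb I<\infty$, and I will derive a bootstrap inequality of the form $B^q\le C+C'B^{q-1}$ or $B^2\le C+CB$. By interpolation with $p=\infty$, it is enough to treat $p$ close to $1$, which means $q=p'$ is a large even integer $q=2k$ (or $k$). I would start with $q=2$ and then run the same scheme for integer $k$-th moments.

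For $q=2$, I would expand $\|\sum_nT_nh_n\|_2^2$ as in the Acknowledgments. The diagonal terms $\sum_n\|T_nh_n\|_2^2$ are bounded by $\sum_n\|h_n\|_2^2\le\|H\|_2^2$, where $H=\sum_n h_n$. For the off-diagonal terms $n<k$, I would write $\langle T_nh_n,T_kh_k\rangle=\langle h_n,T_kh_k\rangle+\langle (T_n^*-I)h_n,T_kh_k\rangle$. The first piece is bounded by positivity:
\[
\sum_{n<k}\langle h_n,T_kh_k\rangle\le\Big\langle H,\sum_kT_kh_k\Big\rangle\le B\|H\|_2^2 .
\]
For the second piece, I would use Plancherel together with \eqref{eq:low-frequency} for $m_n$ and \eqref{eq:high-frequency} for $m_k$. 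Since $s_n\le\lambda^{-(k-n)}s_k$, the pointwise bound is $\min\{1,(s_n\rho)^\alpha\}\min\{1,(s_k\rho)^{-\beta}\}\lesssim\lambda^{-c(k-n)}$ with $c=\min(\alpha,\beta)$. Thus
\[
\Big|\sum_{n<k}\langle (T_n^*-I)h_n,T_kh_k\rangle\Big|\lesssim\sum_{j\ge1}\lambda^{-cj/2}\sum_n\|h_n\|_2\|h_{n+j}\|_2\lesssim\|H\|_2^2 .
\]
Here I split the symbol geometrically as $\sqrt{\cdot}\sqrt{\cdot}$ so that Cauchy--Schwarz applies in $n$, and then used $\sum_n\|h_n\|_2^2\le\|H\|_2^2$. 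This gives $B^2\le C+2B$, so $B\lesssim1$, and hence the $L^2$ maximal bound.

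For general $q=k$, the plan is to expand $\int F^k$ with $F=\sum_i T_{n_i}h_{n_i}$, and in each $k$-tuple select the index with the smallest scale $n$. I would then replace $T_nh_n\cdot\prod_{j}T_{k_j}h_{k_j}$ by $h_n\cdot T_n^*(\prod_j T_{k_j}h_{k_j})$. The main term is then bounded through H\"older by $k\|H\|_k\,\|F'\|_k^{k-1}$-type quantities, where positivity lets me dominate $T_n^*$ of the product by products involving shifted operators, mimicking the engulfing argument \eqref{eq:ergodic-engulfing}. The error is the commutator-type difference $T_nf\,T_kg-T_n(f\,T_kg)$ mentioned in (iii), namely $\int(\cdot)\,d\mu_n(y)$ of $f(x-y)[T_kg(x)-T_kg(x-y)]$. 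Its smallness should come from the fact that $T_kg$ is almost invariant under translations by the support scale $s_n\ll s_k$, which is quantified via \eqref{eq:low-frequency}--\eqref{eq:high-frequency} as an $L^2$ decay of $\lambda^{-c(k-n)}$. I would interpolate that $L^2$ decay against trivial $L^q$ bounds, using the maximal bound already established at $L^2$ to handle the other factors, and obtain geometric decay in the scale separation at the cost of a fraction of the exponent.

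The main obstacle is this last step: making the commutator error summable in $L^q$ and placing it in a form that closes as a bootstrap $B^k\le C+C'B^{k-1}$. The difficulty is that the $L^2$ cancellation must be transported into a multilinear $L^k$ expression with many factors. If a direct multilinear argument fails, I would try first to prove the vector inequality at $q=4$ by writing $\|F\|_4^2=\|F^2\|_2$, treating $F^2$ as a sum of products, applying the $L^2$ argument to those products, and then iterating by doubling the exponent.
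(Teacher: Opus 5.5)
Your $L^2$ argument is correct. It is essentially the authors' original one described in the acknowledgments. Two small fixes: the splitting should read $\langle T_nh_n,T_kh_k\rangle=\langle h_n,T_kh_k\rangle+\langle (T_n-I)h_n,T_kh_k\rangle$, and no factor $4$ is lost in passing to $h_n\ge0$, since $|\sum_nT_nh_n|\le\sum_nT_n|h_n|$. However, it only gives the dual vector inequality for $q=2$, hence \eqref{eq:max} for $p\in[2,\infty]$. The range $p\in(1,2)$, which requires $q=p'>2$, is the real content of the theorem. There you give a program whose decisive step you yourself leave open, and the tools you point to do not close it:
\begin{itemize}
\item There is no engulfing-type pointwise domination of $T_n^*\bigl(\prod_jT_{k_j}h_{k_j}\bigr)$ for general Radon measures (spherical or parabolic ones, say). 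The only link between scales is \eqref{eq:low-frequency}--\eqref{eq:high-frequency}.
\item In a full $k$-th moment expansion, tuples in which the minimal index is repeated give commutators with no decay, and you do not address them.
\item The smallness of $(I-\tau_y)T_kg$ cannot be stated for individual $y$ in a ``support of size $s_n$'': $\rho$ is abstract and $\mu_n$ need not be localized. The hypotheses only control the $\mu_n$-average, through $\int_\G|1-\overline{\chi(y)}|^2\,d\mu_n(y)=2\operatorname{Re}(1-m_n(\chi))$, so any H\"older argument must be run on a product space.
\item The $L^2$ bound alone does not control the remaining factors in $L^q$ for large $q$; vector bounds at intermediate exponents are needed.
\end{itemize}
Your fallback of squaring once points in the right direction. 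But ``applying the $L^2$ argument to the products'' is not meaningful as stated, since Plancherel sees no cancellation in $T_kh_k\,T_nh_n$.

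What is missing is the following chain from the paper, in its indexing with $k<n$.
\begin{itemize}
\item Write $\|F\|_{L^r}^2=\|F^2\|_{L^{r/2}}$. Use $(T_kf_k)^2\le T_k(f_k^2)$ and $T_kf_k\,T_nf_n=T_k(f_kT_nf_n)+C_{k,n}(f_k,f_n)$ to get $F^2\le\sum_kT_kb_k+2|E|$ with $\sum_kb_k\le H^2+2HF$. The main term then costs only $D_{r/2,\mathbb I}$.
\item For $E$, prove the averaged translation estimate \eqref{eq:averaged-translation}. Regard $g\mapsto(I-\tau_y)T_ng(x)$ as an operator into $L^r\bigl((\G,dx)\otimes(\G,d\mu_k)\bigr)$ and interpolate the Plancherel bound $K\lambda^{-\kappa(n-k)/2}$ with the trivial $L^\infty$ bound $2$.
\item H\"older in $y$ with exponents $(r',r)$ then gives $|E_j|\le U_jV_j$. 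Here $U_j^{r'}=\sum_kT_k(f_k^{r'})$ is controlled in $L^{r-1}$ by $D_{r-1,\mathbb I}$, because $r'(r-1)=r$.
\end{itemize}
This yields the bootstrap \eqref{eq:4}. It closes after interpolating $D_{r/2,\mathbb I}$ and $D_{r-1,\mathbb I}$ between $D_{1,\mathbb I}=1$ (preservation of the $L^1$ norm) and $D_{r,\mathbb I}$, giving $D_{r,\mathbb I}\le(3+2L_r)^{r-1}$ for every real $r\ge2$. No even-integer moments or multilinear expansion are needed.
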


Theorem \ref{thm:main} recovers the maximal estimates for polynomial
Radon averaging operators, as well as the maximal estimates for
lacunary spherical averaging operators in $\mathbb R^d$, see
\cite{DF,bigs} and also \cite{MSZ2}. Interestingly, the proof of
Theorem~\ref{thm:main} does not compare $\mu_n$ with a smooth measure nor
dominates $T_n$ by a ball, or Hardy--Littlewood
average, as done in earlier approaches \cite{DF, MSZ2, bigs}. In
particular, the measures need not arise by dilating a single model
measure. The conclusion follows directly from the properties of the
operators themselves, without appealing to Littlewood--Paley theory
as in \cite{DF, MSZ2, bigs}. Nevertheless, $L^2(\mathbb G)$ remains the
decisive analytic space. The required cancellation arises from
combining the low-frequency behavior \eqref{eq:low-frequency} of one
multiplier with the high-frequency decay \eqref{eq:high-frequency} of
another. Fourier analysis is used only at this stage, and simple
interpolation then transports the resulting decay beyond the Hilbert
space $L^2(\mathbb G)$.

An important feature of the proof of Theorem \ref{thm:main} is that it
extends beyond convolution operators. The same method can be applied
to maximal functions associated with ergodic operators whenever the
corresponding averages admit a spectral representation. What
is needed is a representation that allows one to formulate and verify
analogues of the low-frequency approximation estimate
\eqref{eq:low-frequency} and the high-frequency decay estimate
\eqref{eq:high-frequency}. In this setting, the spectral
representation plays the role of the Fourier transform, and the rest
of the argument proceeds in essentially the same way.

\begin{proof}[Proof of Theorem \ref{thm:main}]
We begin by proving that for every exponent $r\in[2, \infty)$ there exists a constant
$C(r):=C(r; \alpha, \beta, \lambda, C_0, C_{\infty})\in\R_+$ such that for
every finite set $\mathbb I\subset\Z$ and every family
$(f_n)_{n\in \mathbb I}\subseteq L^r(\G)$, we have
\begin{equation}
\label{eq:3}
\Big\|\sum_{n\in \mathbb I}T_nf_n\Big\|_{L^r(\G)}
\leq C(r)\Big\|\sum_{n\in \mathbb I}|f_n|\Big\|_{L^r(\G)}.
\end{equation}
Assuming momentarily that \eqref{eq:3} holds, we show how it implies
\eqref{eq:max}.  The adjoint $T_n^*$ is a convolution operator with
the reflected measure $\widetilde\mu_n(E)=\mu_n(-E)$.  Its multiplier
is $\overline{m_n}$, which also satisfies conditions
\eqref{eq:low-frequency} and \eqref{eq:high-frequency}.  Consequently,
inequality \eqref{eq:3} also holds for the adjoint operator $T_n^*$ in place of
$T_n$ and $r=p/(p-1)\in[2, \infty)$ for $p\in(1, 2]$.
Therefore, using \eqref{eq:2}, we obtain
\[
\big\|\sup_{n\in \mathbb I}|T_nf|\big\|_{L^p(\G)} \leq C(p/(p-1))\|f\|_{L^p(\G)}.
\]
Letting $\mathbb I$ increase to $\mathbb Z$, the monotone convergence theorem yields \eqref{eq:max} for all $p\in(1,2]$, and interpolation extends it to the full range $p\in(1,\infty]$, as desired.

We now turn to the proof of \eqref{eq:3}. For this purpose, we introduce the following notation. For $r\in[2,\infty)$, we define the following quantities:
\begin{equation}\label{eq:comm-constants}
\kappa:=\min\{\alpha,\beta\},\qquad
K:=(2C_0C_\infty)^{1/2},\qquad
A_r:=2^{1-2/r}K^{2/r},\qquad
L_r:=\frac{A_r}{\lambda^{\kappa/r}-1}.
\end{equation}

We now fix a finite nonempty set $\mathbb I\subset\Z$ and let $D_{r, \mathbb I}$ be the best constant in \eqref{eq:3}.
The triangle inequality gives $D_{r,\mathbb I}\leq |\mathbb I|$. Positivity allows the operator norm in \eqref{eq:3} to be computed on nonnegative inputs, and the preservation of the $L^1$ norm consequently gives $D_{1,\mathbb I}=1$.

We will show that, for every $r\in[2,\infty)$, the following bootstrap inequality holds:
\begin{align}
\label{eq:4}
D_{r,\mathbb I}^2\leq D_{r/2, \mathbb I}(1+2D_{r, \mathbb I})
+2L_rD_{r-1, \mathbb I}^{(r-1)/r}.
\end{align}

We complete the proof of the theorem in the following four steps.
\smallskip
\paragraph{\textbf{Step 1}}
We show that the following averaged translation estimate holds. Namely, for any $k,n\in\mathbb Z$ with $k<n$, any $r\in[2,\infty)$, and any $g\in L^r(\mathbb G)$, we have
\begin{equation}
\label{eq:averaged-translation}
\left(\int_{\G}
\left\|(I-\tau_y)T_ng\right\|_{L^r(\G)}^r\,d\mu_k(y)\right)^{1/r}
\leq A_r\lambda^{-\kappa(n-k)/r}\|g\|_{L^r(\G)},
\end{equation}
where $\tau_y h(x):=h(x-y)$ denotes the translation operator for $x,y\in\mathbb G$.

Consider the linear operator
\[
R_{n}g(x,y):=(I-\tau_y)T_ng(x), \qquad x,y\in\mathbb G,
\]
on the product measure space ${\mathbb H} := ({\mathbb G}, dx) \otimes ({\mathbb G}, d\mu_k)$.
For $g\in L^2(\mathbb G)$, Plancherel and Fubini give
\begin{align}
\label{eq:averaged-plancherel}
\begin{split}
\left\|R_{n}g\right\|_{L^2({\mathbb H})}^2
&=\int_{\widehat\G}|\widehat g(\chi)|^2|m_n(\chi)|^2
\int_{\G}|1-\overline{\chi(y)}|^2\,d\mu_k(y)\,d\chi\\
&=2\int_{\widehat\G}\operatorname{Re}(1-m_k(\chi))
|m_n(\chi)|^2|\widehat g(\chi)|^2\,d\chi,
\end{split}
\end{align}
since $\widehat{\tau_y h}(\chi)=\overline{\chi(y)}\, \widehat h(\chi)$
and $|1-\overline z|^2=2(1-\operatorname{Re}z)$ for
$z\in\mathbb T$. Combining  \eqref{eq:low-frequency} and
\eqref{eq:high-frequency}, we see that if $k<n$, then
$ |m_k(\chi)-1|\,|m_n(\chi)| \leq C_0C_\infty \lambda^{-\kappa(n-k)}$
for every $\chi\in\widehat\G$. Inserting this bound to \eqref{eq:averaged-plancherel}, we immediately obtain 
\begin{align*}
\left\|R_{n}\right\|_{L^2(\G)\to L^2({\mathbb H})}
\leq K\lambda^{-\kappa(n-k)/2}.
\end{align*}
On the other hand, probability convolution and translation are
$L^\infty(\G)$ contractions, so
\[
\left\|R_{n}\right\|_{L^\infty(\G)\to L^\infty({\mathbb H})}\leq2.
\]
Interpolating between these two bounds, we obtain the desired estimate in \eqref{eq:averaged-translation}.

\smallskip
\paragraph{\textbf{Step 2}}
For $k, n\in\mathbb Z$, define $C_{k,n}(f,g):=T_kf\,T_ng-T_k(f\,T_ng)$, then 
\begin{equation*}
C_{k,n}(f,g)(x)
=\int_{\G}f(x-y)
\bigl[T_ng(x)-T_ng(x-y)\bigr]\,d\mu_k(y), \qquad x\in\G.
\end{equation*}
We prove that for every $r\in[2,\infty)$ and nonnegative family $(f_n)_{n\in \mathbb I}\subseteq L^r(\G)$, we have 
\begin{equation}
\label{eq:comm-total}
\Big\|\sum_{\substack{k,n\in \mathbb I\\k<n}}
C_{k,n}(f_k,f_n)\Big\|_{L^{r/2}(\G)}
\leq L_rD_{r-1, \mathbb I}^{(r-1)/r}\Big\|\sum_{k\in \mathbb I}f_k\Big\|_{L^r(\G)}^2.
\end{equation}
To prove \eqref{eq:comm-total}, we define $H:=\sum_{k\in \mathbb I}f_k$ and $\mathbb I_j:=\{k\in \mathbb I:k+j\in \mathbb I\}$ for $j\in\Z_+$. We can then write
\begin{align*}
\sum_{\substack{k,n\in \mathbb I\\k<n}}
C_{k,n}(f_k,f_n)=\sum_{j\in\Z_+}E_j,
\quad \text{ where } \quad
E_j:=\sum_{k\in\mathbb I_j}C_{k,k+j}(f_k,f_{k+j}).
\end{align*}
It suffices to prove that for every $j\in\Z_+$,
\begin{equation}\label{eq:comm-gap}
\|E_j\|_{L^{r/2}(\G)}
\leq A_r\lambda^{-\kappa j/r}D_{r-1, \mathbb I}^{(r-1)/r}\left\|H\right\|_{L^r(\G)}^2.
\end{equation}
Once \eqref{eq:comm-gap} is established, summing over $j\in\mathbb Z_+$ yields \eqref{eq:comm-total}.
To prove \eqref{eq:comm-gap}, by H\"older's inequality we note that $ |E_j(x)|\leq U_j(x)V_j(x)$, where
\begin{align*}
U_j(x)&:=\Big(\sum_{k\in \mathbb I_j}T_k(f_k^{r'})(x)\Big)^{1/r'},\\
V_j(x)&:=\bigg(\sum_{k\in \mathbb I_j}\int_{\G}
|(I-\tau_y)T_{k+j}f_{k+j}(x)|^r\,d\mu_k(y)\bigg)^{1/r}.
\end{align*}
By the definition of $D_{r-1, \mathbb I}$, we then obtain 
\begin{align*}
\|U_j\|_{L^r(\G)}
=\Big\|\sum_{k\in \mathbb I_j}T_k(f_k^{r'})\Big\|_{L^{r-1}(\G)}^{1/r'}
\leq D_{r-1, \mathbb I}^{1/r'}
\Big\|\sum_{k\in \mathbb I_j}f_k^{r'}\Big\|_{L^{r-1}(\G)}^{1/r'}
\leq D_{r-1, \mathbb I}^{1/r'}\|H\|_{L^r(\G)}.
\end{align*}
By Fubini's theorem and inequality \eqref{eq:averaged-translation}, we have 
\begin{align*}
\|V_j\|_{L^r(\G)}^r
&=\sum_{k\in \mathbb I_j}\int_{\G}
\|(I-\tau_y)T_{k+j}f_{k+j}\|_{L^r(\G)}^r\,d\mu_k(y)\\
&\leq A_r^r\lambda^{-\kappa j}
\sum_{k\in \mathbb I_j}\left\|f_{k+j}\right\|_{L^r(\G)}^r
\leq A_r^r\lambda^{-\kappa j}\|H\|_{L^r(\G)}^r.
\end{align*}
Using H{\"o}lder's inequality  and combining the last two bounds, we obtain \eqref{eq:comm-gap}, as claimed.

\smallskip

\paragraph{\textbf{Step 3}}
We are now ready to prove the bootstrap inequality in \eqref{eq:4}.
By positivity, it suffices to take $f_k\geq0$ for all $k\in\mathbb I$.  Define
$F:=\sum_{k\in \mathbb I}T_kf_k$ and $H:=\sum_{k\in \mathbb I}f_k$.
Expanding $F^2$, using the definition of  $C_{k,n}(f_k,f_n)$ and inequality $(T_kf_k)^2\leq T_k(f_k^2)$, we obtain
\[
F^2\leq\sum_{k\in \mathbb I}T_kb_k+2|E|,
\]
where
\[
b_k:=f_k^2+2f_k\sum_{\substack{n\in \mathbb I\\k<n}}T_nf_n,
\quad \text{ and }\quad
E:=\sum_{\substack{k,n\in \mathbb I\\k<n}}C_{k,n}(f_k,f_n).
\] 
Here $b_k\geq0$ and $\sum_{k\in \mathbb I}b_k\leq H^2+2HF$.  By the definition of $D_{r/2, \mathbb I}$ and $D_{r, \mathbb I}$,
 H\"older's inequality, and estimate \eqref{eq:comm-total}, we conclude that
\begin{align*}
\left\|F\right\|_{L^r(\G)}^2
&\leq D_{r/2, \mathbb I}\bigl(\left\|H\right\|_{L^r(\G)}^2
+2\left\|H\right\|_{L^r(\G)}\left\|F\right\|_{L^r(\G)}\bigr)
+2L_rD_{r-1, \mathbb I}^{(r-1)/r}\left\|H\right\|_{L^r(\G)}^2\\
&\leq
\bigl[D_{r/2, \mathbb I}(1+2D_{r, \mathbb I})+2L_rD_{r-1, \mathbb I}^{(r-1)/r}\bigr]\left\|H\right\|_{L^r(\G)}^2.
\end{align*}
This readily yields the asserted inequality in \eqref{eq:4}.
\smallskip

\paragraph{\textbf{Step 4}} Now we can prove inequality \eqref{eq:3}. Indeed, by a simple interpolation,  we have
\begin{align*}
D_{r/2, \mathbb I}\le D_{r, \mathbb I}^{(r-2)/(r-1)}D_{1, \mathbb I}^{1/(r-1)},
\quad \text{ and }\quad
D_{r-1, \mathbb I}\le D_{r, \mathbb I}^{r(r-2)/(r-1)^2}D_{1, \mathbb I}^{1/(r-1)^2}.
\end{align*}
Using that  $D_{1, \mathbb I}=1$ and inserting these inequalities to \eqref{eq:4}, we obtain
\begin{align*}
D_{r,\mathbb I}^2\leq D_{r, \mathbb I}^{(r-2)/(r-1)}(1+2D_{r, \mathbb I})
+2L_rD_{r, \mathbb I}^{(r-2)/(r-1)}.
\end{align*}
Now we can assume that $D_{r, \mathbb I}\ge 1$,  since otherwise there is nothing to prove. We then obtain $D_{r,\mathbb I}\le (3+2L_r)^{r-1}$ and inequality \eqref{eq:3} follows. This completes the proof of Theorem \ref{thm:main}. 
\end{proof}

\section{Dualization of variational and jumps inequalities}
\label{sec:dvj}
As in the introduction, let $(X,\mathcal B(X),\mu)$ be a $\sigma$-finite measure space. For $p,r\in(1,\infty)$, let $p'=p/(p-1)$ and $r'=r/(r-1)$ denote the conjugate exponents. Let $\mathbb I=(\mathbb I, <)$ be a finite totally ordered index set with at least two elements. We will show that the $r$-variational seminorm defined in \eqref{eq:6}, and more surprisingly the jump-counting function defined in \eqref{eq:7}, can be dualized in a manner analogous to the maximal function identity \eqref{eq:2}.
\subsection{Dualization of variations} Under the above assumptions, define
\[
\mathfrak S(\mathbb I):=\big\{\sigma=(\sigma_j)_{j\in \mathbb N_{\le J_{\sigma}}}\subseteq \mathbb I: \sigma_0<\sigma_1<\ldots<\sigma_{J_\sigma} \text{ for some } J_{\sigma}\in [|\mathbb I|]\big\}.
\]
For a family of measurable functions $F=(F_i)_{i\in\mathbb I}$ the $r$-variational seminorm is defined by
\begin{align}
\label{eq:6}
V^r(F_i: i\in \mathbb I)
:=
\sup_{\sigma\in\mathfrak S(\mathbb I)}
\bigg(
\sum_{j\in[J_{\sigma}]}
|F_{\sigma_j}-F_{\sigma_{j-1}}|^r
\bigg)^{1/r}.
\end{align}
For a family of coefficient functions
$g=(g_{\sigma,j}: \sigma \in \mathfrak S(\mathbb I),\ j\in[J_{\sigma}])$, write
$$
G_g
:=
\sum_{\sigma\in\mathfrak S(\mathbb I)}
\bigg(
\sum_{j\in[J_{\sigma}]}|g_{\sigma,j}|^{r'}
\bigg)^{1/r'}.
$$
By duality, we have
\begin{equation}\label{eq:variation-exact-duality}
\|V^r(F_i: i\in \mathbb I)\|_{L^p(X)}
=
\sup_{\|G_g\|_{L^{p'}(X)}\leq1}
\bigg|
\sum_{\sigma\in\mathfrak S(\mathbb I)}
\sum_{j\in[J_{\sigma}]}
\int_X
(F_{\sigma_j}-F_{\sigma_{j-1}})g_{\sigma,j}\,d\mu
\bigg|.
\end{equation}
Consequently, if $S_i:L^p(X)\to L^p(X)$ is a bounded linear
operator for every $i\in\mathbb I$, and
\begin{equation}\label{eq:variation-vector-adjoint}
\bigg\|
\sum_{\sigma\in\mathfrak S(\mathbb I)}
\sum_{j\in[J_{\sigma}]}
(S_{\sigma_j}^*-S_{\sigma_{j-1}}^*)g_{\sigma,j}
\bigg\|_{L^{p'}(X)}
\leq C\|G_g\|_{L^{p'}(X)},
\end{equation}
then
\begin{equation}\label{eq:variation-from-vector}
\|V^r(S_if:i\in\mathbb I)\|_{L^p(X)}
\leq C\|f\|_{L^p(X)}.
\end{equation}
In fact, the optimal constants in
\eqref{eq:variation-vector-adjoint} and
\eqref{eq:variation-from-vector} are equal, since
\begin{align}
\label{eq:variation-equal-norms}
\sup_{\|f\|_{L^p(X)}\leq1}
\|V^r(S_if:i\in\mathbb I)\|_{L^p(X)}
=
\sup_{\|G_g\|_{L^{p'}(X)}\leq1}
\bigg\|
\sum_{\sigma\in\mathfrak S(\mathbb I)}
\sum_{j\in[J_{\sigma}]}
(S_{\sigma_j}^*-S_{\sigma_{j-1}}^*)g_{\sigma,j}
\bigg\|_{L^{p'}(X)}.
\end{align}
Thus the variational operator and the dualized sum have exactly the same norm. In this sense, identity \eqref{eq:variation-equal-norms} is the variational analogue of the maximal function identity \eqref{eq:2}. We now show that the jump-counting function can also be dualized, see identity \eqref{eq:jump-exact-duality} below. This is a priori less obvious, since, unlike the $r$-variational seminorm, it defines only a quasi-norm.

\subsection{Dualization of jumps} Under the above assumptions, define
\[
\mathfrak C(\mathbb I):=\big\{\pi=((\sigma_j, \tau_j))_{j\in [J_\pi]}\subseteq \mathbb I^2: \sigma_1<\tau_1\leq \sigma_2<\tau_2\leq\ldots \leq \sigma_{J_\pi}<\tau _{J_\pi} \text{
for some } J_\pi\in [|\mathbb I|]\big\}.
\]
For measurable functions $F=(F_i)_{i\in\mathbb I}$ and
$\lambda\in\mathbb R_+$, set
$$
E_\pi(F,\lambda)
:=
\bigcap_{j\in[J_{\pi}]}
\{x\in X: |F_{\tau_j}(x)-F_{\sigma_j}(x)|>\lambda\}.
$$
The $\lambda$-jump counting function can be defined as follows
\begin{align}
\label{eq:7}
N_{\lambda}(F_i: i\in \mathbb I) := \max\big\{J_\pi\mathds{1}_{E_\pi(F,\lambda)}: \pi\in\mathfrak C(\mathbb I)\big\},
\end{align}
and $N_{\lambda}(F_i: i\in \mathbb I)=0$ if $E_\pi(F,\lambda)=\emptyset$.

Let $\mathcal W_\lambda(F)$ be the collection of measurable
weights $w=(w_\pi)_{\pi\in \mathfrak C(\mathbb I)}$ satisfying
$$
w_\pi\geq0,
\qquad
w_\pi=0\ \text{ on }\ E_\pi(F,\lambda)^c,
\qquad
\sum_{\pi\in \mathfrak C(\mathbb I)} w_\pi\leq1,
$$
Then the following exact identity holds
\begin{equation}\label{eq:jump-exact-duality}
\big\|\lambda N_{\lambda}(F_i:i\in \mathbb I)^{1/2}\big\|_{L^p(X)}
=
\lambda\sup_{w\in\mathcal W_\lambda(F)}
\bigg\|\Big(\sum_{\pi\in \mathfrak C(\mathbb I)} J_\pi w_\pi\Big)^{1/2}\bigg\|_{L^p(X)}.
\end{equation}

To verify \eqref{eq:jump-exact-duality}, consider for every $u\in\mathbb R_+$ the  weight $w^{(u)}:=(w_\pi^{(u)})_{ \pi\in \mathfrak C(\mathbb I)}$, where
$$
w_\pi^{(u)}
:=
\frac{\mathds{1}_{E_\pi(F,\lambda)}e^{uJ_{\pi}}}
{1+\sum_{\sigma\in \mathfrak C(\mathbb I)}
\mathds{1}_{E_\sigma(F,\lambda)}e^{uJ_\sigma}}, \qquad \pi\in \mathfrak C(\mathbb I).
$$
Observe  that
$$
\lim_{u\to\infty}\sum_{\pi\in \mathfrak C(\mathbb I)} J_\pi w_\pi^{(u)}=
N_{\lambda}(F_i: i\in \mathbb I),
$$
which by Fatou's lemma implies that
\begin{align*}
\big\|N_{\lambda}(F_i:i\in \mathbb I)^{1/2}\big\|_{L^p(X)}
\le
\sup_{w\in\mathcal W_\lambda(F)}
\bigg\|\Big(\sum_{\pi\in \mathfrak C(\mathbb I)} J_\pi w_\pi\Big)^{1/2}\bigg\|_{L^p(X)}.
\end{align*}
Observing that
$\sum_{\pi\in \mathfrak C(\mathbb I)} J_\pi w_\pi\leq N_{\lambda}(F_i:i\in \mathbb I)$
for every $w\in\mathcal W_\lambda(F)$, the reverse inequality follows, and hence \eqref{eq:jump-exact-duality} holds as desired.
Consequently, a family of operators
$S_i:L^p(X)\to L^p(X)$ satisfies
\begin{equation}\label{eq:jump-operator-bound}
\sup_{\lambda\in\mathbb R_+}
\big\|\lambda N_{\lambda}(S_if:i\in\mathbb I)^{1/2}
\big\|_{L^p(X)}
\leq C\|f\|_{L^p(X)}
\end{equation}
if and only if, for every $f\in L^p(X)$, every $\lambda\in\mathbb R_+$,
and every $w\in\mathcal W_\lambda((S_if)_{i\in \mathbb I})$, we have 
\begin{equation}\label{eq:jump-weight-bound}
\lambda\bigg\|\Big(\sum_{\pi\in \mathfrak C(\mathbb I)} J_\pi w_\pi\Big)^{1/2}\bigg\|_{L^p(X)}
\leq C\|f\|_{L^p(X)}.
\end{equation}
The optimal constants in
\eqref{eq:jump-operator-bound} and
\eqref{eq:jump-weight-bound} are equal by identity \eqref{eq:jump-exact-duality}.

\section{Variational and jump inequalities in Hilbert spaces}\label{sec:hil}
As in the previous section, let $(X,\mathcal B(X),\mu)$ be a $\sigma$-finite measure space. All
operators act on the complex Hilbert space $L^2(X)$, with the scalar product
$$
\langle f,g\rangle:=\int_X f\overline g\,d\mu.
$$
For bounded self-adjoint operators, the notation $S\leq T$ means
that $\langle Sf,f\rangle\leq\langle Tf,f\rangle$ for every
$f\in L^2(X)$. Let $(T_n)_{n\in\mathbb Z}$ be a sequence of linear operators on $L^2(X)$ satisfying: 
\begin{enumerate}[itemsep=5pt]
\item[(H1)] $0\leq T_{n+1}\leq T_n\leq I$ for every $n\in\mathbb Z$. This ensures that every $T_n$ is a contraction on $L^2(X)$.
\item[(H2)] Each $T_n$ preserves positivity, that is maps nonnegative functions
to nonnegative functions $\mu$-almost everywhere on $X$. In particular,   $|Th|\le T|h|$ for every $h\in L^2(X)$.
\item[(H3)] If $h\in L^2(X)$ and $0\leq h\leq1$, then
$0\leq T_nh\leq1$ for every $n\in\mathbb Z$.
\end{enumerate}
Condition (H3) is the sub-Markov property and will be only used in the proof of jump inequality. It is formulated without
applying $T_n$ to the constant function $1$, which need not belong
to $L^2(X)$.

The main result of this section is the following theorem.

\begin{theorem}\label{thm:abstract}
Let $(T_n)_{n\in\mathbb Z}$ be a sequence of bounded operators on $L^2(X)$.
Assume that conditions (H1) and (H2) are satisfied. Then, for every $f\in L^2(X)$, we have
\begin{equation}\label{eq:maximal-theorem}
\big\|\sup_{n\in\mathbb Z}|T_nf|\big\|_{L^2(X)}\leq2\|f\|_{L^2(X)}.
\end{equation}
Moreover, for every $r>2$ one has
\begin{equation}\label{eq:variation-theorem}
\|V^r(T_nf:n\in\mathbb Z)\|_{L^2(X)}\leq \sqrt{33}\Big(1-2^{-\frac{r-2}{2(r-1)}}\Big)^{-1}\|f\|_{L^2(X)}.
\end{equation}
If additionally condition (H3) holds, then
\begin{equation}\label{eq:jump-theorem}
\sup_{\lambda\in\mathbb R_+}
\big\|\lambda N_\lambda(T_nf:n\in\mathbb Z)^{1/2}\big\|_{L^2(X)}
\leq\sqrt5\,\|f\|_{L^2(X)}.
\end{equation}
\end{theorem}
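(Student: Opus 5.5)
The plan is to treat all three estimates by the dualization principle of Section~\ref{sec:dvj}, using that (H1) makes every $T_n$ self-adjoint, so that $T_n^*=T_n$. For \eqref{eq:maximal-theorem}, by \eqref{eq:2} and monotone convergence it suffices to show, for a finite $\mathbb I\subset\mathbb Z$ and $g_n\ge0$ (positivity (H2) lets us restrict to such inputs), that $\|F\|_{L^2}\le 2\|H\|_{L^2}$ with $F:=\sum_{n\in\mathbb I}T_ng_n$ and $H:=\sum_{n\in\mathbb I} g_n$. Writing $F_{\ge n}:=\sum_{k\ge n}T_kg_k$ and expanding,
\[
\|F\|_{L^2}^2\le 2\sum_{n\in\mathbb I}\langle T_ng_n,F_{\ge n}\rangle=2\sum_{n\in\mathbb I}\langle g_n,T_nF_{\ge n}\rangle .
\]
The point is to compare $T_nF_{\ge n}$ with $F$ using monotonicity. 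I would use the Cauchy--Schwarz inequality for the positive operator $T_n$, $\langle T_n a,b\rangle\le\langle T_na,a\rangle^{1/2}\langle T_nb,b\rangle^{1/2}$, together with $T_n^2\le T_n$ and $T_k\le T_n$ for $k\ge n$. The goal is the bound $\sum_n\langle g_n,T_nF_{\ge n}\rangle\le \langle H,F\rangle$, possibly after a telescoping rearrangement $F_{\ge n}=\sum_{k\ge n}(F_{\ge k}-F_{\ge k+1})$. Then $\|F\|^2\le 2\|H\|\|F\|$, giving constant $2$. If only a weaker inequality of the form $\|F\|^2\le \|H\|^2+2\|H\|\|F\|$ comes out, I would close it by a best-constant bootstrap $B^2\le c+2B$, exactly as in Step 3 of the proof of Theorem~\ref{thm:main}, and accept a slightly different numerical constant.

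For \eqref{eq:variation-theorem} I would use \eqref{eq:variation-equal-norms} with $p=2$. This reduces the problem to bounding $\|\sum_{\sigma}\sum_j (T_{\sigma_j}-T_{\sigma_{j-1}})g_{\sigma,j}\|_{L^2}$ by $\|G_g\|_{L^2}$. Each difference $D=T_{\sigma_{j-1}}-T_{\sigma_j}$ satisfies $0\le D\le I$ by (H1), and for fixed $\sigma$ the differences telescope, so that $\sum_j D_{\sigma,j}\le I$. I would first prove a ``single Hilbert-space differencing estimate'' for one partition, namely
\[
\Big\|\sum_j D_jh_j\Big\|_{L^2}\lesssim\Big\|\Big(\sum_j|h_j|^2\Big)^{1/2}\Big\|_{L^2},
\]
via orthogonality-type control $\sum_j\|D_j^{1/2}u\|^2\le\|u\|^2$. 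Then I would pass from $\ell^2$ to $\ell^{r'}$ by splitting the jumps according to the size of $|g_{\sigma,j}|$ relative to $(\sum_j|g_{\sigma,j}|^{r'})^{1/r'}$ into dyadic level sets. The $2^{-s}$ level has at most $2^{sr'}$ members, and on it the $\ell^2$ norm is controlled by $2^{-s(1-r'/2)}$ times the $\ell^{r'}$ norm. Summing these gains is the geometric series that should produce the factor $(1-2^{-\frac{r-2}{2(r-1)}})^{-1}$. Finally I would combine across different $\sigma$ by the maximal estimate \eqref{eq:maximal-theorem}, in its dual form, to absorb positivity losses; I expect this to be the source of $\sqrt{33}$.

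For \eqref{eq:jump-theorem} I would use \eqref{eq:jump-exact-duality}. On $E_\pi$ we have $\lambda^2J_\pi\le\sum_j|(T_{\tau_j}-T_{\sigma_j})f|^2$, so for $w\in\mathcal W_\lambda$ it suffices to bound
\[
\int\sum_\pi w_\pi\sum_j|(T_{\sigma_j}-T_{\tau_j})f|^2\,d\mu\le5\|f\|_{L^2}^2 .
\]
Here (H3) enters: $0\le w_\pi\le1$ are admissible functions, so multiplication by them followed by $T_n$ stays sub-Markov. Writing $|D f|^2$ through $\langle Df,Df\rangle$ and using $D^2\le D$ with telescoping $\sum_jD_j\le I$, I expect a bound of the form
\[
\sum_j\|D_jf\|^2_{L^2(w)}\le \|f\|^2+\text{(error terms)}.
\]
The error terms come from the failure of $w_\pi$ to commute with $T_n$, and they should be controlled by a multiple of $\|\sup_n|T_nf|\|^2\le4\|f\|^2$; this would give $1+4=5$. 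The main obstacle throughout is the first step, the non-commutative comparison of $T_nT_k$ with $T_k$ that yields the clean maximal bound; the variational and jump parts then rest on it.
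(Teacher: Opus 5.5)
Your overall framework is the right one: dualize, use self-adjointness, split into dyadic level sets. Your level-set gain $2^{-s(2-r')/2}$ is also exactly the paper's. But the mechanism that makes each of the three estimates work is missing.

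\textbf{Maximal estimate.} Your target $\sum_n\langle g_n,T_nF_{\ge n}\rangle\le\langle H,F\rangle$ is in fact true. However, Cauchy--Schwarz for each $T_n$ separately leaves you with $\sum_n\langle T_nF_{\ge n},F_{\ge n}\rangle$, which grows with $|\mathbb I|$. Your telescoping of $F_{\ge n}$ is just its definition rewritten. The missing idea is summation by parts on the input side. With $S_i=\sum_{j\le i}g_j$ one has $\sum_iT_ig_i=\sum_{i<N}(T_i-T_{i+1})S_i+T_NS_N$. This is a combination of positive operators summing to $T_{-N}\le I$. For such families the joint Cauchy--Schwarz inequality $\|\sum_kD_kh_k\|^2\le\sum_k\langle D_kh_k,h_k\rangle$ holds. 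Telescoping then gives $\|F\|^2\le\sum_i(\langle T_iS_i,S_i\rangle-\langle T_iS_{i-1},S_{i-1}\rangle)\le2\sum_i\int g_i\,T_iG\,d\mu$, which yields the constant $2$. Your fallback bootstrap $B^2\le c+2B$ would not give $2$.

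\textbf{Variation estimate.} The one-partition $\ell^2$ bound is not where the difficulty lies; for a single $\sigma$, $\ell^{r'}\subset\ell^2$ already, so no level sets are needed there. The real issue is the $\ell^1$ sum over exponentially many $\sigma$. For arbitrary coefficients, an $\ell^1_\sigma(\ell^2_j)$ bound is, by \eqref{eq:variation-equal-norms}, equivalent to an $L^2$ bound for $V^2$, which is false. So the level-set structure must be used \emph{across} different $\sigma$, and appealing to the dual maximal estimate does not say how. The paper pools all $\sigma$ into one sequence $h^{(m)}_k$ with $|h^{(m)}_k|\le2^{-m}G$, since each $\sigma$ contributes at most one $j$ for a given $k$. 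It then proves the energy estimate \eqref{eq:energy}, $\|\sum_kD_kh_k\|^2\le\|H\|^2+4\|H\|\|W\|$ with $W=\sum_k|h_{k+1}-h_k|$ and $D_k=T_{k-1}-T_k$. The ingredients are joint Cauchy--Schwarz, telescoping, the bound $|\langle Tu,u\rangle-\langle Tv,v\rangle|\le2\int|u-v|\,TH\,d\mu$, and the maximal inequality applied to $\int W\sup_kT_kH\,d\mu$. The count $|\mathbb J_{\sigma,m}|\le2^{(m+1)r'}$ enters only through $W_m\le2^{r'+1}2^{m(r'-1)}G$.

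\textbf{Jump estimate.} Your reduction to $\int\sum_\pi w_\pi\sum_j|b_{\pi,j}|^2\,d\mu\le5\|f\|_{L^2}^2$, with $b_{\pi,j}=(T_{\sigma_j}-T_{\tau_j})f$, asks for something false. Let $\lambda\downarrow0$ and, at each point, put all the weight on the maximizing $\pi$. Monotone convergence would then give $\|V^2(T_nf:n\in[\pm N])\|_{L^2}\le\sqrt5\|f\|_{L^2}$ uniformly in $N$. That fails already for martingales, which satisfy (H1)--(H3); this failure is precisely why jumps are the endpoint substitute.

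Squaring the jumps is too lossy. You need the linear bound $\lambda J_\pi\le\sum_j|b_{\pi,j}|$ on $E_\pi$, paired with $g_{\pi,j}=w_\pi b_{\pi,j}/|b_{\pi,j}|$. Self-adjointness and telescoping then give $\lambda L_w\le\langle f,\sum_kD_kh_k\rangle$, where $L_w:=\int\sum_\pi J_\pi w_\pi\,d\mu$, $|h_k|\le1$, and $\int\sum_k|h_{k+1}-h_k|\,d\mu\le2L_w$. Conditions (H2)--(H3) give $|T_kh_j|\le1$. Hence each quadratic increment in the telescoped joint Cauchy--Schwarz bound is at most $2\int|h_{k+1}-h_k|\,d\mu$. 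This yields $\|\sum_kD_kh_k\|^2\le\|h_{-N+1}\|^2+4L_w\le5L_w$, and $\lambda L_w\le\sqrt5\|f\|L_w^{1/2}$ closes the argument. The $5$ is $1+4$ coming from this $L^\infty$ bound; the maximal function plays no role in the jump estimate.
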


An interesting feature of Theorem \ref{thm:abstract} is that no commutativity, convolution representation, smoothness, dilation structure, or endpoint limits for $T_n$ are assumed. The hypotheses are sufficient for the proof, but they are not intended as a characterization of necessary conditions. The variation constant displayed above grows like $(r-2)^{-1}$ as $r\searrow2$. 

In place of condition (H3), it is enough to assume the uniform bound
$$
\|T_nh\|_{L^\infty(X)}\leq K_\infty\|h\|_{L^\infty(X)},
\qquad h\in L^2(X)\cap L^\infty(X).
$$
Then the right-hand side of \eqref{eq:bounded-energy} becomes
$2K_\infty\int_X W\,d\mu$, and the same proof yields the jump constant $\sqrt{1+4K_\infty}$ in \eqref{eq:jump-theorem} in place of $\sqrt{5}$.
Neither this assumption nor condition (H3) is used in the proofs of the maximal or $r$-variation estimates.

Suppose now that $T_n$ are self-adjoint $L^2(X)$ contractions that preserve positivity and satisfy $T_{n+1}\leq T_n$, without assuming $T_n\geq0$. Then $S_n=\frac{1}{2}(I+T_n)$ satisfies conditions (H1) and (H2). If $T_n$ satisfies (H3), then so does $S_n$. Since
$T_j-T_i=2(S_j-S_i)$, the variation and jump estimates remain valid, with constants
$2\sqrt{33}\big(1-2^{-\frac{r-2}{2(r-1)}}\big)^{-1}$
and $2\sqrt5$, respectively. This further illustrates why the hypotheses of Theorem \ref{thm:abstract} are sufficient and not necessary.

\smallskip

Before proving Theorem \ref{thm:abstract}, we discuss several examples that easily follow from it.

\smallskip

\begin{enumerate}[label={\arabic*.}, itemsep=5pt]

\item (\textit{General dilation groups}). Let $A$ be a $d\times d$
real matrix whose eigenvalues have positive real parts. For
$t\in\mathbb R_+$, define $t^A:=\exp(A\log t)$ and set
$Q:=\operatorname{tr}A$, so that $\det(t^A)=t^Q$. We write $A^*$ for
the adjoint matrix of $A$. Let $\mu$ be a probability measure on ${\mathbb R}^d$. For $(t_n)_{n\in\mathbb Z} \subseteq {\mathbb R}_{+}$, define the dilate $\mu_{t_n}$ via
$$
{\widehat{\mu_{t_n}}}(\xi):= {\widehat{\mu}}(t_n^{A^*}\xi),
\qquad \text{and}\qquad
T_nf:=\mu_{t_n} * f.
$$
The family of operators $(T_n)_{n\in\mathbb Z}$ satisfies hypotheses (H2) and (H3) but not necessary (H1).

In \cite{JSW}, variation and jump inequalities were established for {\it lacunary} dilates $T_n f = \mu_{2^n} * f$ for $n\in\mathbb Z$ under the additional hypothesis
\begin{align}
\label{F-decay} 
|{\widehat \mu}(\xi)| \ \le \ (1 + |\xi|)^{-\beta} \quad \text{for some} \quad \beta\in\mathbb R_+.
\end{align}
The proof involves, as usual, identifying a martingale structure adapted to the dilation group $(t^A)_{t\in\mathbb R_+}$ which is highly nontrivial for general matrices $A$.

Let us replace the decay hypothesis \eqref{F-decay} with the following monotoncity hypothesis:
\begin{align}
\label{eq:8}
1\geq\widehat\mu(t_n^{A^*}\xi)
\geq\widehat\mu(t_{n+1}^{A^*}\xi)\geq0.
\end{align}
Then Theorem \ref{thm:abstract} applies. Interestingly, in this case
the constants are independent of the dimension, the dilation group and
the measure, subject only to the stated
assumptions. Importantly,  lacunarity of the dilation parameters $(t_n)_{n\in\mathbb Z}$ is not needed, the 
conclusion holds for any increasing sequence of scales satisfying
\eqref{eq:8}. The argument uses only
positivity preservation, the contraction property, and the ordering of
the Fourier multipliers.

\item (\textit{Verification of condition \eqref{eq:8}}). A natural
question is for which measures condition \eqref{eq:8} holds. 
We consider the case of {\it quasi-radial} multipliers; that is, when ${\widehat \mu}(\xi) = m(\|\xi\|)$ for some nonincreasing multiplier $m:[0,\infty) \to [0,1]$. Here $\|\cdot\|$ is a norm given by $\|x\|^2 = \langle Bx, x\rangle$ where $B$ is a positive definite, symmetric matrix with the property that $t \to \|t^{A^{*}} x\|$ is a strictly increasing function for every nonzero $x \in {\mathbb R}^d$. In this case, we see that \eqref{eq:8} holds since $t \to {\widehat \mu}(t^{A^{*}} \xi) = m(\|t^{A^{*}}\xi\|)$ is nonincreasing.

If $A=\operatorname{diag}(a_1,\ldots,a_d)$ is a diagonal matrix with
$a_1,\ldots,a_d\in\mathbb R_+$, then the usual euclidean norm $\|\cdot\|$ (so $B = I$ is identity matrix) works. For general real matrices $A$ whose eigenvalues have positive real parts, the positive symmetric matrix
$$
B:=\int_0^\infty e^{-uA}e^{-uA^*}\,du
$$
does the job. See \cite[Proposition 1-7]{SW} for details.

\item (\textit{Positive Radon measures on locally compact abelian groups}). More generally, let $\mathbb G=(\mathbb G,+)$ be a locally compact abelian group as in Section \ref{sec:radon}. Let $(\nu_n)_{n\in\mathbb Z}$ be a sequence of probability Radon measures on $\mathbb G$ satisfying
\[
1\geq\widehat\nu_n(\chi)
\geq\widehat\nu_{n+1}(\chi)\geq0,
\qquad
n\in\mathbb Z,\quad \chi\in\widehat{\mathbb G}.
\]
For $f\in L^2(\mathbb G)$, define $T_nf:=f*\nu_n.$
Then the operators $T_n$ satisfy conditions (H1), (H2), and (H3), so Theorem \ref{thm:abstract} applies. No density, smoothness, group geometry, or underlying dilation structure is required. The case $\mathbb G=\mathbb R^d$ is recovered as a particular example.

\item (\textit{Martingales}).  Let $(\mathcal F_n)_{n\in\mathbb Z}$ be a
decreasing filtration and let
$\mathbb E_n$ be the corresponding conditional expectation.  These operators are
positive self-adjoint contractions, satisfy $\mathbb E_n1=1$, and are nested
orthogonal projections.  Consequently, they satisfy conditions (H1), (H2),
and (H3) and Theorem \ref{thm:abstract} gives the corresponding
maximal, $r$-variational, and jump estimates on $L^2$. We remark that for martingales, the constant $2$ in the maximal inequality \eqref{eq:maximal-theorem} is optimal. Furthermore,
Theorem \ref{thm:abstract} exploits the Hilbert-space mechanism
underlying martingale estimates, without using martingale difference
decompositions. Until this work, $r$-variational and jump estimates
were widely viewed as being intrinsically tied to martingale methods.

\item (\textit{Sub-Markov semigroups}).  Let $(S_t)_{t\in \mathbb R_+}$ be a strongly continuous symmetric sub-Markov semigroup on $L^2(X)$. Here symmetry means that each $S_t,$  for $t\in \mathbb R_+,$ is self-adjoint, while being sub-Markov means that $S_t$ is a contraction on $L^2(X)$ and that (H2), (H3) hold (with $T_n$ replaced by $S_t$). Then, for a non-decreasing sequence of positive times $(t_n)_{n\in \Z}\subseteq \mathbb R_+,$ the operators $T_n=S_{t_n}$ satisfy the conditions (H1), (H2), and (H3). The last two conditions are obvious from the definition while (H1) follows from the contractivity and semigroup properties since $0\le \langle S_{t_{n+1}/2} f,  S_{t_{n+1}/2}f\rangle=\langle S_{t_{n+1}} f,  f\rangle \le \langle f, f\rangle$ and
\begin{align*}\langle S_{t_{n+1}} f,  f\rangle&=\langle S_{t_{n+1}/2} f,  S_{t_{n+1}/2}f\rangle=\|S_{t_{n+1}/2}f\|^2_{L^2(X)}=\|S_{(t_{n+1}-t_n)/2}\,S_{t_n/2}f\|^2_{L^2(X)}\\
&\le \|S_{t_{n}/2}f\|^2_{L^2(X)}=\langle S_{t_n/2} f,  S_{t_n/2}f\rangle=\langle S_{t_n} f,  f\rangle. 
\end{align*}
Therefore, Theorem \ref{thm:abstract} applies to the operators $S_{t_n}.$ Furthermore, because of the strong $L^2$ continuity, a density argument allows us to obtain the maximal \eqref{eq:maximal-theorem}, variational \eqref{eq:variation-theorem} and jump \eqref{eq:jump-theorem} estimates for the full (continuous) semigroup $(S_t)_{t\in \mathbb R_+}.$ We remark that the constant $2$ in \eqref{eq:maximal-theorem} is also optimal for sub-Markov semigroups, see \cite{CHMNW}. 

\item (\textit{$L^p$ and endpoint extensions}).  For Schwartz bump
families associated with dilations, the $L^2$ estimates
above can be upgraded, under the usual uniform kernel size and
smoothness assumptions, to strong $L^p$ estimates for $1<p<\infty$ and
to weak-type $(1,1)$ estimates by the standard vector-valued
Calder\'on--Zygmund decomposition and interpolation arguments.  For
conditional expectations, analogous conclusions are already known;
see, for instance, the appendix in \cite{MST2}. Thus, the genuinely
new point here is the direct treatment of $r$-variational and jump
estimates beyond the martingale setting, using purely Hilbert-space
methods.
\end{enumerate}

\begin{proof}[Proof of Theorem \ref{thm:abstract}]
It suffices to establish inequalities \eqref{eq:maximal-theorem},
\eqref{eq:variation-theorem}, and \eqref{eq:jump-theorem} with
$[\pm N]$ in place of $\mathbb Z$ for every $N\in\mathbb Z_+$. The
stated inequalities then follow by the monotone convergence
theorem. The proof is divided into several steps for clarity. The key
inputs are the dualization identities \eqref{eq:variation-equal-norms}
and \eqref{eq:jump-exact-duality} established in the previous section.

\smallskip

\paragraph{\bf Step 1} We begin with a variant of the Cauchy--Schwarz inequality for bounded positive  operators on a Hilbert space $\mathbb H$ equipped with an inner product $\langle\cdot,\cdot\rangle:\mathbb H\times\mathbb H\to\mathbb C$ and the induced norm $\|\cdot\|$. Fix $M\in\mathbb Z_+$ and let $D_1,\ldots,D_M$ be bounded positive  operators
on $\mathbb H$ (not necessarily commuting) such that $\sum_{k\in[M]}D_k\leq I$. Then for any  $h_1,\ldots, h_M\in\mathbb H$, we have 
\begin{equation}\label{eq:operator-cs}
\Big\|\sum_{k\in[M]} D_kh_k\Big\|^2
\leq\sum_{k\in[M]}\langle D_kh_k,h_k\rangle.
\end{equation}
Indeed, since each $D_k$ is bounded and positive, $\langle u, v\rangle_k := \langle D_k u, v \rangle$ defines a semi-inner product and so 
for any $u\in\mathbb H$, the Cauchy-Schwarz inequality implies
$|\langle D_kh_k,u\rangle|^2 \leq\langle D_kh_k,h_k\rangle\langle D_ku,u\rangle$ for $k\in[M]$.  Using the linearity of the inner product, followed by the above inequality, the Cauchy--Schwarz inequality and $\sum_{k\in[M]}D_k\leq I$, we obtain
\begin{align*}
\Big|\Big\langle\sum_{k\in[M]}D_kh_k,u\Big\rangle\Big|
&\leq\sum_{k\in[M]}\langle D_kh_k,h_k\rangle^{1/2}
                 \langle D_ku,u\rangle^{1/2}\\
&\leq\Big(\sum_{k\in[M]}\langle D_kh_k,h_k\rangle\Big)^{1/2}
       \Big\langle\sum_{k\in[M]}D_ku,u\Big\rangle^{1/2}\\
&\leq\Big(\sum_{k\in[M]}\langle D_kh_k,h_k\rangle\Big)^{1/2}\|u\|.
\end{align*}
Taking the
supremum over $\|u\|\leq1$ inequality \eqref{eq:operator-cs} follows as claimed.

For further reference,  for every self-adjoint bounded  operator $T:L^2(X)\to L^2(X)$, we have 
\begin{equation}\label{eq:quadratic-identity}
\langle Tu,u\rangle-\langle Tv,v\rangle
=\operatorname{Re}\langle u-v,T(u+v)\rangle.
\end{equation}
If additionally $T$ preserves positivity, then $|Th|\le T|h|$ for
every $h\in L^2(X)$, and consequently  for every $u, v, H\in L^2(X)$ such that
$|u|,|v|\leq H$, using identity \eqref{eq:quadratic-identity} we obtain
\begin{equation}\label{eq:quadratic-bound}
\left|\langle Tu,u\rangle-\langle Tv,v\rangle\right|
\leq2\int_X |u-v|\,TH\,d\mu.
\end{equation}

\paragraph{\bf Step 2}
We now prove maximal estimate from \eqref{eq:maximal-theorem}.
Define
$$
B_N:=\sup_{\|f\|_{L^2(X)}\leq1}
\big\|\sup_{i\in [\pm N]}|T_if|\big\|_{L^2(X)}.
$$
Note that $B_N\leq\sqrt{2N+1}<\infty$, since every $T_i$ is a contraction.
By dualization identity \eqref{eq:2} and
self-adjointness, we may write
\begin{equation}\label{eq:maximal-dual}
B_N=\sup_{\|G\|_{L^2(X)}\leq1}
\Big\|\sum_{i\in[\pm N]}T_ig_i\Big\|_{L^2(X)},
\quad \text{ where } \quad G=\sum_{i\in[\pm N]}|g_i|.
\end{equation}
Set $S_i:=\sum_{j=-N}^ig_j$ and $S_{-N-1}:=0$. Summation by parts gives
$$
\sum_{i\in[\pm N]}T_ig_i
=\sum_{i=-N}^{N-1}(T_i-T_{i+1})S_i+T_NS_N.
$$
We have  $T_i-T_{i+1}\ge0$ and $T_N\ge 0$  and  $\sum_{i=-N}^{N-1}(T_i-T_{i+1})+T_N=T_{-N}\leq I$.
Therefore, inequality \eqref{eq:operator-cs} and telescoping  give
\begin{align*}
\Big\|\sum_{i\in[\pm N]}T_ig_i\Big\|_{L^2(X)}^2
&\leq\sum_{i=-N}^{N-1}\langle(T_i-T_{i+1})S_i,S_i\rangle
       +\langle T_NS_N,S_N\rangle\\
&=\sum_{i\in[\pm N]}\bigl(
\langle T_iS_i,S_i\rangle-\langle T_iS_{i-1},S_{i-1}\rangle
\bigr)\\
&\leq2\sum_{i\in[\pm N]}\int_X|g_i|T_iG\,d\mu\\
&\leq2\int_XG\sup_{ i\in [\pm N]}T_iG\,d\mu
\leq2B_N\|G\|_{L^2(X)}^2.
\end{align*}
In the third line, we used estimate \eqref{eq:quadratic-bound}, together with the identities $S_i-S_{i-1}=g_i$ and the bounds $|S_i|,|S_{i-1}|\leq G$. In the final inequality, we applied the Cauchy--Schwarz inequallity, and used the definition of the constant $B_N$. Hence we obtain $B_N^2\leq 2B_N$ or $B_N\leq 2$. Letting $N\to\infty$, we conclude inequality  \eqref{eq:maximal-theorem} as desired.

\smallskip

\paragraph{\bf Step 3}
We now refine the argument from the previous step so that it can be applied to the variational and jump estimates.
Set $D_k:=T_{k-1}-T_k$ for $k\in [\pm N]\setminus\{-N\}$ and note that $\sum_{k=-N+1}^ND_k=T_{-N}-T_N\leq I$,  since the operators are monotone.  For $k\in [\pm N]\setminus\{-N\}$ suppose that $h_k\in L^2(X)$ satisfies $|h_k|\leq H$ for some nonnegative
$H\in L^2(X)$, and define
$$
W:=\sum_{k=-N+1}^{N-1}|h_{k+1}-h_k|.
$$
Then we claim that
\begin{equation}\label{eq:energy}
\Big\|\sum_{k=-N+1}^ND_kh_k\Big\|_{L^2(X)}^2
\leq\|H\|_{L^2(X)}^2+4\|H\|_{L^2(X)}\|W\|_{L^2(X)}.
\end{equation}
If additionally condition (H3) holds and $|h_k|\leq1$, then also
\begin{equation}\label{eq:bounded-energy}
\Big\|\sum_{k=-N+1}^ND_kh_k\Big\|_{L^2(X)}^2
\leq\|h_{-N+1}\|_{L^2(X)}^2+2\int_XW\,d\mu,
\end{equation}
whenever the right-hand side is finite.

To prove estimate \eqref{eq:energy}, we apply inequality \eqref{eq:operator-cs}, which gives

\begin{align}
\Big\|\sum_{k=-N+1}^ND_kh_k\Big\|_{L^2(X)}^2
&\leq\sum_{k=-N+1}^N\langle D_kh_k,h_k\rangle\notag\\
&=\langle T_{-N}h_{-N+1},h_{-N+1}\rangle
 -\langle T_Nh_N,h_N\rangle\notag\\
&\quad+\sum_{k=-N+1}^{N-1}\bigl(
\langle T_kh_{k+1},h_{k+1}\rangle
-\langle T_kh_k,h_k\rangle\bigr).
\label{eq:energy-identity}
\end{align}
The first term is at most
$\|h_{-N+1}\|_{L^2(X)}^2\leq\|H\|_{L^2(X)}^2$; the second is
nonpositive and can be discarded. By \eqref{eq:quadratic-bound},
followed by the Cauchy--Schwarz inequality and the maximal estimate
\eqref{eq:maximal-theorem}, the remaining terms are bounded by
$$
2\int_X\sum_{k=-N+1}^{N-1}|h_{k+1}-h_k|T_kH\,d\mu
\leq2\int_XW\max_{k\in [\pm N]}T_kH\,d\mu
\leq4\|W\|_{L^2(X)}\|H\|_{L^2(X)}.
$$
This proves \eqref{eq:energy}. For \eqref{eq:bounded-energy}, use (H3) and
 obtain $|T_kh_j|\leq T_k|h_j|\le 1$.
Identity \eqref{eq:quadratic-identity} then bounds each
quadratic increment by $2\int_X|h_{k+1}-h_k|\,d\mu$.
Substitution into \eqref{eq:energy-identity} gives the assertion.

\smallskip

\paragraph{\bf Step 4}
Fix $r>2$, we now prove inequality \eqref{eq:variation-theorem} with $[\pm N]$ in
place of $\mathbb Z$ for every $N\in\mathbb Z_+$.  By dualization
formula \eqref{eq:variation-equal-norms} for $r$-variations \eqref{eq:6} and
self-adjointness, it suffices to prove that
\begin{equation}\label{eq:dual-variation}
\|\Delta\|_{L^2(X)}\leq \sqrt{33}\Big(1-2^{-\frac{r-2}{2(r-1)}}\Big)^{-1}\|G\|_{L^2(X)},
\end{equation}
where for any coefficient functions $(g_{\sigma,j}: \sigma \in \mathfrak S([\pm N]),\ j\in[J_{\sigma}])$, we have
\[
\Delta:=\sum_{\sigma\in\mathfrak S([\pm N])}\sum_{j\in[J_\sigma]}(T_{\sigma_j}-T_{\sigma_{j-1}})g_{\sigma,j},
\qquad
G_\sigma:=\Big(\sum_{j\in[J_\sigma]}|g_{\sigma,j}|^{r'}\Big)^{1/r'},
\qquad G:=\sum_{\sigma\in\mathfrak S([\pm N])} G_\sigma.
\]

For every integer $m \in\mathbb N$, set $g_{\sigma,j}^{(m )}: =g_{\sigma,j}\mathds{1}_{\{2^{-m -1}G_\sigma<|g_{\sigma,j}| \leq2^{-m }G_\sigma\}}$,
and define
$$
\Delta_m :=\sum_{\sigma\in\mathfrak S([\pm N])}\sum_{j\in[J_\sigma]}(T_{\sigma_j}-T_{\sigma_{j-1}})g_{\sigma,j}^{(m )}.
$$
Since $T_{\sigma_j}-T_{\sigma_{j-1}}
=-\sum_{k=\sigma_{j-1}+1}^{\sigma_j}D_k$, where $D_k:=T_{k-1}-T_k$,
we note that
\[
\Delta_m =-\sum_{k=-N+1}^ND_kh_k^{(m )},
\]
where for each $k\in [\pm N]\setminus\{-N\}$, we have
$$
h_k^{(m )}:= \sum_{\sigma\in\mathfrak S([\pm N])}\sum_{\substack{j\in[J_\sigma]\\ \sigma_{j-1}<k\leq \sigma_j}}
g_{\sigma,j}^{(m )}.
$$
Observe that for every fixed $\sigma\in\mathfrak S([\pm N])$ the intervals $(\sigma_{j-1},\sigma_j]$  are disjoint when $j\in[J_\sigma]$. Hence the inner sum in the definition of $h_k^{(m )}$ contains at most one nonzero summand, and consequently\begin{equation}\label{eq:level-height}
|h_k^{(m )}|\leq2^{-m }\sum_{\sigma\in\mathfrak S([\pm N])} G_\sigma=2^{-m }G.
\end{equation}
In particular, 
$$
\|\Delta_m \|_{L^2(X)} \ \le \ 4N \, \sup_{k\in [\pm N]}  \|h_k^{(m)}\|_{L^2(X)} \ \le \ 4 N 2^{-m} \|G\|_{L^2(X)}
$$
and so
\begin{equation}\label{eq:level-sum}
\Delta=\sum_{m \in\mathbb N} \Delta_m \quad\text{ in } \quad L^2(X).
\end{equation}

For $x\in X$ define $\mathbb J_{\sigma, m }(x):=\{j\in[J_\sigma]: |g_{\sigma,j}^{(m )}(x)|>0\}$, and note that $|\mathbb J_{\sigma, m }(x)|\le 2^{(m +1)r'}$, since 
\[
G_\sigma(x)^{r'}\ge \sum_{j\in \mathbb J_{\sigma, m }(x)}|g_{\sigma,j}^{(m)}(x)|^{r'}> |\mathbb J_{\sigma, m }(x)|2^{-(m +1)r'}G_\sigma(x)^{r'}.
\]
Observing that for each $\sigma\in\mathfrak S([\pm N])$ and $j\in[J_\sigma]$, we have
$$
\sum_{k=-N+1}^{N-1} 
|\mathds{1}_{(\sigma_{j-1},\sigma_j]}(k+1) - \mathds{1}_{(\sigma_{j-1},\sigma_j]}(k)| \le 2.
$$
Now using the triangle inequality
and the preceding count, we conclude that
\begin{align}
\begin{split}
W_m 
:=&\sum_{k=-N+1}^{N-1}|h_{k+1}^{(m )}-h_k^{(m )}|
\leq2\sum_{\sigma\in\mathfrak S([\pm N])}\sum_{j\in[J_\sigma]}|g_{\sigma,j}^{(m )}|\\
\leq&2\sum_{\sigma\in\mathfrak S([\pm N])}2^{(m +1)r'}2^{-m }G_\sigma
=2^{r'+1}2^{m (r'-1)}G.
\end{split}
\label{eq:level-total-variation}
\end{align}
Taking $H_{m }:=2^{-m }G$  we have  $|h_k^{(m )}|\leq H_m $ by \eqref{eq:level-height}.
Applying inequality \eqref{eq:energy} with $H=H_m $ and $W=W_m $, we immediately conclude 
\begin{align*}
\|\Delta_m \|_{L^2(X)}^2
\leq\left(2^{-2m }
       +2^{r'+3}2^{-m (2-r')}\right)\|G\|_{L^2(X)}^2
\leq 33\cdot 2^{-m (2-r')}\|G\|_{L^2(X)}^2,
\end{align*}
since $r'\in(1,2)$. Taking square roots, using identity
\eqref{eq:level-sum}, and summing the resulting geometric series, we
obtain \eqref{eq:dual-variation} with the desired constant. Letting
$N\to\infty$ then yields estimate \eqref{eq:variation-theorem}.  \smallskip

\paragraph{\bf Step 5}
Assume condition (H3), and  fix $\lambda\in\mathbb R_+$.  We now prove inequality \eqref{eq:jump-theorem} with
$[\pm N]$ in place of $\mathbb Z$ for every $N\in\mathbb Z_+$. We will use dualization identity from \eqref{eq:jump-exact-duality}, which for $p=2$ takes particularly simple form
\begin{align}
\label{eq:5}
\big\|\lambda N_{\lambda}(T_nf:n\in [\pm N])^{1/2}\big\|_{L^2(X)}^2
=
\lambda^2\sup_{w\in\mathcal W_\lambda((T_nf)_{n\in [\pm N]})}
\int_X\sum_{\pi\in \mathfrak C([\pm N])} J_\pi w_\pi\, d\mu.
\end{align}
Fix $w\in\mathcal W_\lambda((T_nf)_{n\in [\pm N]})$ and  define 
\[
L_{w}:=\int_X\sum_{\pi\in \mathfrak C([\pm N])} J_\pi w_\pi\, d\mu.
\]
For $\pi\in \mathfrak C([\pm N])$ and $j\in[J_{\pi}]$ set $b_{\pi,j}:=(T_{\sigma_j}-T_{\tau_j})f$. Now for each $k\in [\pm N]\setminus\{-N\}$, define
\[
h_k:=\sum_{\sigma\in\mathfrak S([\pm N])}\sum_{\substack{j\in[J_\sigma]\\ \sigma_{j}<k\leq \tau_j}}g_{\pi,j}, \qquad \text{ where } \qquad g_{\pi,j}:=w_\pi\frac{b_{\pi,j}}{|b_{\pi,j}|}.
\]
Observe that $g_{\pi,j}$ is well defined, since $|b_{\pi,j}|>\lambda$ on the support of $w_\pi$, and we set $g_{\pi,j}=0$ outside the support of $w_\pi$. All these functions belong to $L^2(X)$,
because the weights are bounded by one and they are supported in a
fixed set $E_{\pi}((T_nf)_{n\in [\pm N]}, \lambda)$ which has  finite measure. Observe that
\begin{align}
\lambda L_w
&\leq\int_X\sum_{\pi\in \mathfrak C([\pm N])} \sum_{j\in[J_{\pi}]}w_\pi|b_{\pi,j}|\,d\mu
=\sum_{\pi\in \mathfrak C([\pm N])} \sum_{j\in[J_{\pi}]}\langle b_{\pi,j},g_{\pi,j}\rangle
=\Big\langle f,\sum_{k=-N+1}^ND_kh_k\Big\rangle.
\label{eq:jump-pairing}
\end{align}
The last equality uses self-adjointness and
telescoping $T_{\sigma_j}-T_{\tau_j}=\sum_{k=\sigma_j+1}^{\tau_j}D_k$.
As in the previous step for any fixed $\pi\in\mathfrak S([\pm N])$ the intervals $(\sigma_{j},\tau_j]$  are disjoint when $j\in[J_\pi]$, hence 
$$
|h_k|\leq\sum_{\pi\in \mathfrak C([\pm N])} w_\pi\leq1.
$$
Moreover, each sequence $[\pm N]\setminus\{-N\}\ni k\mapsto\mathds{1}_{(\sigma_{j},\tau_j]}(k)$ has
discrete total variation at most two, using the triangle inequality, we obtain
$$
\int_X\sum_{k=-N+1}^{N-1}|h_{k+1}-h_k|\,d\mu
\leq2\int_X\sum_{\pi\in \mathfrak C([\pm N])} \sum_{j\in[J_{\pi}]}|g_{\pi,j}|\,d\mu=2L_w.
$$
Also we have $\|h_{-N+1}\|_{L^2(X)}^2\leq L_w$, since
$$
|h_{-N+1}|^2\leq|h_{-N+1}|
\leq\sum_{\pi\in \mathfrak C([\pm N])} w_\pi\leq\sum_{\pi\in \mathfrak C([\pm N])} J_\pi w_\pi.
$$
Applying inequality 
\eqref{eq:bounded-energy}, we conclude that
$$
\Big\|\sum_{k=-N+1}^ND_kh_k\Big\|_{L^2(X)}^2\leq5L_w.
$$
Thus \eqref{eq:jump-pairing} and Cauchy--Schwarz imply $\lambda L_w\leq\sqrt5\,\|f\|_{L^2(X)}L_w^{1/2}$.
If $L_w=0$, there is nothing to prove. Otherwise, dividing by $L_w^{1/2}$ and squaring, we immediately obtain
$\lambda^2L_w\leq5\|f\|_{L^2(X)}^2$. Taking the supremum over all weights $w\in\mathcal W_\lambda((T_nf)_{n\in[\pm N]})$, invoking \eqref{eq:5}, and then letting $N\to\infty$, we obtain \eqref{eq:jump-theorem}. This completes the proof of Theorem \ref{thm:abstract}.
\end{proof}

\end{document}